\newtheorem{question}{Question}[section]
\newtheorem{lemma}[question]{Lemma}
\newtheorem{theorem}[question]{Theorem}
\newtheorem{corollary}[question]{Corollary}
\newcommand{\leqnomode}{\tagsleft@true}
\newcommand{\reqnomode}{\tagsleft@false}
\def\dd{\hbox{-}}
\DeclareMathOperator{\tw}{tw}
\DeclareMathOperator{\dist}{dist}
\DeclareMathOperator{\ta}{tree-\alpha}
\newcounter{tbox}
\newcommand{\sta}[1]{\vspace*{0.3cm}\refstepcounter{tbox}\noindent{ \parbox{\textwidth}{(\thetbox) \emph{#1}}}\vspace*{0.3cm}}
\newcommand{\mylongtitle}[1]{%
  \ifodd\value{page}%
    \protect\parbox{0.97\linewidth}{#1}\hfill%
  \else%
    \hfill\protect\parbox{0.97\linewidth}{#1}%
  \fi%
}
\newcommand{\otherlabel}[2]{\protected@edef\@currentlabel{#2}\label{#1}}
\mathchardef\mh="2D
\title[Tree independence number II.]{Tree independence number \\
II. Three-path-configurations.}
\author{Maria Chudnovsky$^{\dagger}$}
\author{Sepehr Hajebi $^{\mathsection}$}
\author{Daniel Lokshtanov$^{\ddagger}$}
\author{Sophie Spirkl$^{\mathsection \parallel}$}
\thanks{$^{\mathsection}$Department of Combinatorics and Optimization, University of Waterloo, Waterloo, Ontario, Canada}
\thanks{$^{\dagger}$ Princeton University, Princeton, NJ, USA. Supported by  NSF-EPSRC Grant DMS-2120644 and by AFOSR grant FA9550-22-1-0083.}
\thanks{$^{\ddagger}$ Department of Computer Science, University of California Santa Barbara, Santa Barbara, CA, USA. Supported by NSF grant CCF-2008838.}
\thanks{$^{\parallel}$ We acknowledge the support of the Natural Sciences and Engineering Research Council of Canada (NSERC), [funding reference number RGPIN-2020-03912].
Cette recherche a \'et\'e financ\'ee par le Conseil de recherches en sciences naturelles et en g\'enie du Canada (CRSNG), [num\'ero de r\'ef\'erence RGPIN-2020-03912].  This project was funded in part by the Government of Ontario. This research was conducted while Spirkl was an Alfred P. Sloan fellow.}
\date {\today}
\begin{document}

\maketitle
\begin{abstract}
  A \textit{three-path-configuration} is a graph consisting of three
  pairwise internally-disjoint paths the union of every two of which is
  an induced cycle of length at least four. A graph is \textit{3PC-free} if no induced subgraph
  of it is a three-path-configuration.
  We prove that 3PC-free graphs   have poly-logarithmic tree independence
  number. More explicitly, we show that there exists a constant $c$ such that
  every $n$-vertex 3PC-free graph graph
  has a tree decomposition  in which every bag has stability
  number at most $c (\log n)^2$. This implies that  the \textsc{Maximum Weight Independent Set} problem, as well as
 several other natural algorithmic problems,
  that are  known to be 
  \textsf{NP}-hard in general, can be solved in quasi-polynomial time if
  the input graph is 3PC-free.
\end{abstract}

\section{Introduction}
All graphs in this paper are finite and simple and all logarithms are base $2$. We include the following standard definitions for the reader's convenience (see, for example, \cite{tw15}). 
Let $G = (V(G),E(G))$ be a graph. For $X \subseteq V(G)$, we denote by $G[X]$ the subgraph of $G$ induced by $X$, and by $G \setminus X$ the subgraph of $G$ induced by $V(G) \setminus X$. We use induced subgraphs and their vertex sets interchangeably.
For graphs $G,H$ we say that $G$ {\em contains $H$} if $H$ is
isomorphic to $G[X]$ for some $X \subseteq V(G)$; otherwise, we say that
$G$ is {\em $H$-free}. 
For a family $\mathcal{H}$ of graphs, $G$ is said to be
{\em $\mathcal{H}$-free} if $G$ is $H$-free for every $H \in \mathcal{H}$.

Let $v \in V(G)$. The \emph{open neighborhood of $v$}, denoted by $N_G(v)$, is the set of all vertices in $V(G)$ adjacent to $v$. The \emph{closed neighborhood of $v$}, denoted by $N_G[v]$, is $N(v) \cup \{v\}$. Let $X \subseteq V(G)$. The \emph{(open) neighborhood of $X$}, denoted $N_G(X)$, is the set of all vertices in $V(G) \setminus X$ with at least one neighbor in $X$. The \emph{closed neighborhood of $X$}, denoted by $N_G[X]$, is $N_G(X) \cup X$. When there is
no danger of confusion, we often omit the subscript $G$.
Let $Y \subseteq V(G)$ with $X \cap Y = \emptyset$. We say $X$ is \textit{complete} to $Y$ if all possible edges with an end in $X$ and an end in $Y$ are present in $G$, and $X$ is \emph{anticomplete}
to $Y$ if there are no edges between $X$ and $Y$.

For a graph $G = (V(G),E(G))$, a \emph{tree decomposition} $(T, \chi)$ of $G$ consists of a tree $T$ and a map $\chi: V(T) \to 2^{V(G)}$ with the following properties: 
\begin{enumerate}[(i)]
\itemsep -.2em
    \item For every vertex $v \in V(G)$, there exists $t \in V(T)$ such that $v \in \chi(t)$. 
    
    \item For every edge $v_1v_2 \in E(G)$, there exists $t \in V(T)$ such that $v_1, v_2 \in \chi(t)$.
    
    \item For every vertex $v \in V(G)$, the subgraph of $T$ induced by $\{t \in V(T) \mid v \in \chi(t)\}$ is connected.
\end{enumerate}

The \emph{width} of a tree decomposition $(T, \chi)$ is $\max_{t \in V(T)} |\chi(t)|-1$. The \emph{treewidth} of $G$, denoted by $\tw(G)$, is the minimum width of a tree decomposition of $G$. 
Treewidth was first introduced by Robertson and Seymour in their work on  graph minors.
A bound on the treewidth of a graph provides important information  about its structure \cite{RS-GMXVI};
it is also useful from the algorithmic perspective \cite{Bodlaender1988DynamicTreewidth}. As a result
treewidth has been extensively studied in both structural and algorithmic graph theory.

A {\em stable (or independent) set} in a graph $G$ is a set of pairwise non-adjacent vertices of $G$. The {\em stability (or independence) number} $\alpha(G)$ of $G$ is the size of a maximum stable set in $G$. Given a graph $G$ with weights on its
vertices, the \textsc{Maximum Weight Independent Set (MWIS)} problem is
the problem of finding a stable set in $G$ of maximum total weight.
This problem is known to be \textsf{NP}-hard \cite{alphahard}, but it can be solved
efficiently (in polynomial time)  in graphs of  bounded treewidth.
Closer examination of the algorithm motivated 
Dallard, Milani\v{c} and \v{S}torgel \cite{dms2} to define 
a related graph width parameter, specifically targeting the complexity of the
\textsc{MWIS} problem.
The {\em independence number} of a tree decomposition 
$(T, \chi)$ of $G$ is $\max_{t \in V(T)} \alpha(G[\chi(t)])$. The {\em tree independence number} of $G$, denoted $\ta(G)$, is the minimum independence number of a tree decomposition of $G$. Graphs with large treewidth and  small
$\ta$ are graphs whose large  treewidth can be  explained by the presence of
a large clique. 
It is shown in \cite{dms2} that if a graph is given together with a tree decomposition with bounded independence number, then the MWIS problem 
can be solved in polynomial time. Moreover, \cite{dfgkm} presents an algorithm
that constructs  such tree decompositions efficiently in graphs of bounded $\ta$, yielding an efficient algorithm for the MWIS problem for graphs of bounded $\ta$.

We need the following standard definitions (see, for example, \cite{tw3, tw15}). A {\em hole} in a graph is an induced cycle of length at least four.
A {\em path} in a graph is an induced subgraph that is a path. The
{\em length} of a path or a hole is the number of edges in it.
Given a path $P$ with ends $a,b$, the {\em interior} of $P$, denoted by
$P^*$, is the set $P \setminus \{a,b\}$.

A {\em theta} is a graph consisting of two distinct  vertices $a, b$ and three
paths $P_1, P_2, P_3$ from $a$ to $b$, such that $P_i \cup P_j$ is a hole
for every $i,j \in \{1,2,3\}$. It follows that $a$ is non-adjacent to $b$
and the sets $P_1^*,P_2^*, P_3^*$ are pairwise disjoint and
anticomplete to each other.
If a graph $G$ contains an induced subgraph $H$ that is a theta, and $a, b$ are the two vertices of degree three in $H$, then we say that $G$ contains a
theta \emph{with ends $a$ and $b$}.

A {\em pyramid} is a graph consisting of a vertex $a$ and a triangle $\{b_1, b_2, b_3\}$, and three paths $P_i$ from $a$ to $b_i$ for $1 \leq i \leq 3$,
such that  $P_i \cup P_j$ is a hole for every $i,j \in \{1,2,3\}$.
It follows 
that $P_1 \setminus a, P_2 \setminus a , P_3 \setminus a$ are pairwise disjoint,
and the only edges between them are of the form $b_ib_j$.
It also follows that 
at most one of $P_1, P_2, P_3$ has length exactly one. We say that $a$ is the {\em apex} of the pyramid and that $b_1b_2b_3$ is the {\em base} of the pyramid. 

A {\em generalized prism} is a graph consisting of two triangles $\{a_1,a_2,a_3\}$ and  $\{b_1, b_2, b_3\}$, and three paths $P_i$ from $a_i$ to $b_i$
for $1 \leq i \leq 3$, and such that  $P_i \cup P_j$ is a hole for every $i,j \in \{1,2,3\}$. It follows 
that $P_1^*, P_2^*, P_3^*$ are pairwise disjoint and anticomplete to each other,
$|\{a_1,a_2,a_3\} \cap \{b_1,b_2,b_3\}| \leq 1$, and if
$a_1=b_1$, then $P_2^* \neq \emptyset$ and $P_3^* \neq \emptyset$.
Moreover, the only edges between $P_i$ and $P_j$ are 
$a_ia_j$ and $b_ib_j$.
A {\em prism} is a generalized prism whose triangles are disjoint. 
A {\em pinched prism} is a generalized prism whose triangles meet.

A {\em three-path-configuration (3PC)}
is a graph that is either a theta, or
pyramid, or a generalized prism
(see Figure~\ref{fig:3pc}).
It is easy to check that this
definition is equivalent to the one in the abstract.
Let $\mathcal{C}$ be the class of (theta, pyramid, generalized prism)-free graphs; $\mathcal{C}$ is also known as the class of {\em 3PC-free} graphs.

\begin{figure}[t!]
    \centering
    \includegraphics[scale=0.6]{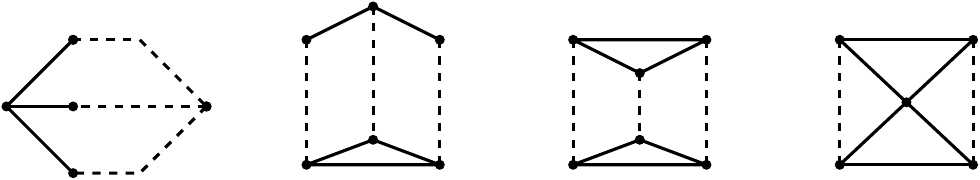}
    \caption{The three-path-configurations. From left to right: A theta, a pyramid, a prism and a pinched prism (dashed lines depict paths of non-zero length).}
    \label{fig:3pc}
\end{figure}

The following is the main result of \cite{tw3}:
\begin{theorem}[Abrishami, Chudnovsky, Hajebi, Spirkl \cite{tw3}]
\label{thetaKt}
For every integer $t>0$ there exists a constant $c(t)$ such that
for every $n$-vertex graph 
$G \in \mathcal{C}$ that  contains no clique of size $t$,
$\tw(G)\leq c(t) \log n$. 
\end{theorem}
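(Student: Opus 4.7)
The plan is to prove Theorem~\ref{thetaKt} by establishing constant-size balanced separators. Specifically, I would show that there is a function $f(t)$ such that for every induced subgraph $H$ of $G$, one can find a set $X \subseteq V(H)$ with $|X| \leq f(t)$ for which every component of $H \setminus X$ has at most $|V(H)|/2$ vertices. A standard recursive construction of tree decompositions then converts such ``$f(t)$-balanced separators for every induced subgraph'' into a tree decomposition of $G$ of width $O(f(t) \log n)$: at each recursion step, place the separator into the current bag and recurse on each component, halting when components have constant size; the recursion has depth $O(\log n)$. Since 3PC-freeness and the no-$K_t$ condition are inherited by induced subgraphs, it suffices to construct the separator for $G$ itself.

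To locate the separator, I would use a BFS layering rooted at an arbitrary vertex $v$: let $L_i$ be the set of vertices at distance exactly $i$ from $v$. Since consecutive layers separate the graph, there is an index $i$ such that $L_i$ is a balanced separator in the sense that every component of $G \setminus L_i$ contains at most $|V(G)|/2$ vertices. The layer $L_i$ itself may be huge, so the goal becomes to extract from $L_{i-1} \cup L_i \cup L_{i+1}$ a subset of size at most $f(t)$ that still separates the two heavy sides $A \subseteq L_0 \cup \cdots \cup L_{i-1}$ and $B \subseteq L_{i+1} \cup \cdots$. A useful preliminary refinement: either a single connected component of $G[L_i]$ (together with its neighborhood) already gives a balanced separator — in which case one recurses inside it — or the heavy sides genuinely straddle $L_i$ and must be separated by a small transversal.

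The technical heart is bounding this transversal under the 3PC-free hypothesis. I would argue by contradiction via Menger's theorem: if no separator of size $f(t)$ works, then there exist $f(t)+1$ internally-disjoint induced paths between $A$ and $B$, each one a shortest path through $L_i$, so of length exactly three ``layers''. Since $G$ has no $K_t$, a Ramsey-type argument applied to the endpoints of these paths in $L_{i-1}$ and $L_{i+1}$ (or to their attachments in $A,B$) produces a large sub-family of paths whose ends form a structured pattern — typically a large stable set or a mixture controlled by Ramsey. Choosing three paths $P_1,P_2,P_3$ from this sub-family and analyzing the induced subgraph on $P_1 \cup P_2 \cup P_3$ together with adjacencies in the neighboring layers should yield either a theta (when the ends on each side are stable), a pyramid (when exactly one side has a triangle), or a generalized prism (when both sides do), contradicting $G \in \mathcal{C}$.

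The main obstacle is precisely this last extraction step: ensuring that three paths and a small amount of ``glue'' from adjacent layers really do induce one of the three forbidden configurations, rather than picking up extra chords between the $P_i$ that destroy the required holes. This forces a careful case analysis of the adjacency pattern between the $P_i$, which is where 3PC-freeness must be leveraged locally (to rule out chords) and where the no-$K_t$ hypothesis is used repeatedly to avoid large cliques forming in the layers and blocking the extraction of induced paths. The resulting $f(t)$ is expected to be polynomial or at worst exponential in $t$; the $\log n$ factor in the final bound arises solely from the recursion depth in the tree-decomposition construction of the first paragraph.
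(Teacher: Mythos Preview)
This theorem is not proved in the present paper; it is quoted from \cite{tw3}, and the sentence immediately following it records that the bound is asymptotically best possible. So there is no in-paper proof to compare against, but your plan has a genuine gap that is worth spelling out.

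Your first step is to show that every induced subgraph $H$ of $G$ has a balanced separator of size at most $f(t)$, a constant depending only on $t$. This intermediate claim is false. If it held, then by the Dvo\v{r}\'ak--Norin theorem relating treewidth linearly to separation number (or by the standard weighted-separator recursion), one would obtain $\tw(G)=O(f(t))$, a bound independent of $n$. But the paper recalls, immediately after stating Theorem~\ref{thetaKt}, that \cite{mainconj} constructs triangle-free graphs in $\mathcal{C}$ with treewidth $\Theta(\log n)$. The $\log n$ in the statement is therefore not an artifact of a naive recursion; it reflects a genuine lower bound, and constant-size balanced separators do not exist for this class even when $t=3$.

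Consequently your Menger-based extraction must break down, and one can see where: Menger produces internally vertex-disjoint paths, not induced paths that are pairwise anticomplete, and you give no mechanism to control edges between distinct Menger paths or between a path and the rest of its BFS layer. Three such paths need not induce a theta, pyramid, or generalized prism, and the ``careful case analysis'' you defer cannot be completed, because the conclusion it is meant to establish is false. The actual route (in \cite{tw3}, and in the tree-independence setting developed in this paper) does not look for constant-size separators at all; one first finds balanced separators of the dominated form $N[Y]$ with $|Y|$ bounded (cf.\ Theorem~\ref{balancedsep} here), and the logarithmic factor enters in the subsequent step that converts such a separator into one of controlled size or stability number.
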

This is a strengthening of a conjecture of \cite{mainconj}
that theta-free graphs with no $3$-vertex clique have logarithmic treewidth.
It was also shown in \cite{mainconj} that there exist triangle-free graphs in $\mathcal{C}$ with arbitrarily large treewidth (in fact, treewidth logarithmic in
the number of vertices), and so the bound of Theorem~\ref{thetaKt} is
asymptotically best
possible. A consequence of Theorem~\ref{thetaKt} is that the \textsc{MWIS} problem (as well as many others) can be solved in polynomial time on 3PC-free graphs with bounded
clique number.

It is now natural to ask  about 3PC-free  graphs with no bound on the clique number. Since the complete bipartite graph $K_{2,3}$ is a theta, and therefore
is forbidden in graphs in $\mathcal{C}$, one would expect these graphs to
behave well with respect to $\ta$.
Our main result  here  confirms this. We prove:
\begin{theorem} \label{treealpha}
  There exists a constant $c$ such that for every integer $n>1$ every
  $n$-vertex graph $G \in \mathcal{C}$ has tree independence number 
  at most $c(\log n)^2$.
\end{theorem}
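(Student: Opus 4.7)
The plan is to reformulate Theorem~\ref{treealpha} in terms of balanced $\alpha$-separators and to construct such separators recursively, using Theorem~\ref{thetaKt} as a black box in the bounded-clique regime. Following the standard translation (see \cite{dms2}), it suffices to show that for every $n$-vertex $G \in \mathcal{C}$ and every weight function $w$ on $V(G)$, there exists a set $S \subseteq V(G)$ with $\alpha(G[S]) = O((\log n)^2)$ such that every component of $G \setminus S$ has $w$-weight at most $w(V(G))/2$; applying this inductively to induced subgraphs then yields the claimed tree decomposition.

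To produce such a separator I would split into two regimes according to $\omega(G)$. When $\omega(G) \le f(n)$ for a suitable polylogarithmic threshold $f$, Theorem~\ref{thetaKt} with $t = f(n)+1$ gives $\tw(G) \le c(f(n))\log n$, and a standard $w$-balanced separator of size at most $\tw(G)+1$ has $\alpha$-value at most its size, which is $O((\log n)^2)$ for an appropriate choice of $f$. When $\omega(G) > f(n)$, I would fix a large clique $K$ and partition $V(G) \setminus K$ into classes indexed by attachment to $K$. Using the 3PC-free hypothesis, I would argue that (i) only boundedly many attachment classes can host a large stable set, and (ii) these ``heavy'' classes are structured enough to yield a balanced separator $S \subseteq N[K]$ with $\alpha(G[S]) = O(\log n)$ whose removal strictly decreases the clique number in each piece. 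Iterating this step $O(\log n)$ times and combining with the bounded-clique regime accumulates a factor of $O(\log n)$ on top of the per-level $O(\log n)$ loss, delivering the $(\log n)^2$ bound.

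The technical heart, and the main obstacle, is the structural lemma needed in the large-clique case: in any 3PC-free graph with a large clique $K$, the attachment of $V(G) \setminus K$ to $K$ must be ``tame'' enough for a small-$\alpha$ balanced separator to be extracted from $N[K]$. This is where all three forbidden configurations enter: theta-freeness (equivalently, $K_{2,3}$-freeness) rules out parallel attachments across a non-edge of $K$, pyramid-freeness rules out single-apex ``fan'' attachments, and prism-freeness rules out parallel attachments linking two distinct cliques. I expect the argument to proceed by a Ramsey-style pigeonhole on attachment classes combined with a Menger-type construction that exhibits a theta, pyramid, or prism whenever too many distinct classes simultaneously support large stable sets. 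Once this lemma is in place, the translation from the balanced $\alpha$-separators it produces to a tree decomposition of independence number $O((\log n)^2)$ is routine.
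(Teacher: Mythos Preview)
Your plan has a genuine gap in both regimes.

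In the bounded-clique regime you invoke Theorem~\ref{thetaKt} with $t = f(n)$ polylogarithmic and conclude $\tw(G) \le c(t)\log n = O((\log n)^2)$. This requires $c(t) = O(\log n)$ when $t$ is polylogarithmic in $n$, i.e., roughly $c(t)$ polynomial in $t$. Theorem~\ref{thetaKt} gives no such quantitative control: the constant $c(t)$ there comes out of Ramsey-type machinery and is not shown to be polynomial (or even subexponential) in $t$. Without that, the black-box use yields nothing. The paper in fact remarks that Theorem~\ref{treealpha} recovers only the weaker $(\log n)^2$ version of Theorem~\ref{thetaKt}, which is consistent with the two proofs being independent rather than one feeding the other.

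In the large-clique regime, the structural lemma you sketch---that a $w$-balanced separator of stability $O(\log n)$ can be found inside $N[K]$ for a fixed large clique $K$, and that its removal strictly decreases $\omega$ in every piece---is unsupported. Nothing forbids another large clique, disjoint from and anticomplete to $K$, from sitting entirely in one component, so $\omega$ need not drop; and there is no reason a \emph{weight}-balanced separator should live in $N[K]$ at all. The 3PC obstructions you list do constrain attachments to $K$, but that controls the structure of $N(K)$, not the weight distribution of $G \setminus N[K]$, which is what balancing must address. Your ``iterate $O(\log n)$ times'' also presupposes that each step cuts $\omega$ by a constant factor, which you have not argued.

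For comparison, the paper avoids any case split on $\omega$. It first shows (Theorem~\ref{balancedsep}) that every $G \in \mathcal{C}$ admits a $w$-balanced separator of the form $N[Y]$ with $|Y| \le d$ an absolute constant. Separately (Theorem~\ref{H1H2logn}) it shows that any two anticomplete cliques can be separated by a set of stability $O(\log n)$. These two ingredients are then combined in an iterative refinement (Theorem~\ref{smallsep}) that converts the ``dominated'' separator $N[Y]$ into one of small stability number, picking up the second $\log n$ factor along the way. Cliques appear only as bookkeeping inside that iteration, not as a global invariant to be driven down.
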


Note that since the class of theta-free graphs is ``$\chi$-bounded'' (see
\cite{chisurvey}  for details), Theorem~\ref{treealpha} yields  a weakening of Theorem~\ref{thetaKt}, that for every integer $t>0$, there exists a constant $c(t)$ such that
for every $n$-vertex graph 
$G \in \mathcal{C}$ that  contains no clique of size $t$,
$\tw(G)\leq c(t) (\log n)^2$. On the other hand, since the  only construction of 3PC-graphs with large treewidth known to date
is the construction of 
\cite{mainconj}
where all graphs have clique number at most four,
we do not know if the bound of Theorem~\ref{treealpha}
is asymptotically tight, or whether it can be made linear in $\log n$  (in which case, it would imply Theorem~\ref{thetaKt}).

Another result in this paper that may be of independent interest is the
following:
\begin{theorem} \label{thm:banana}
   Let $G \in \mathcal{C}$ with $|V(G)|=n$, and let $a,b \in V(G)$ be
  non-adjacent. Then there is a set $X \subseteq V(G) \setminus \{a,b\}$
  with $\alpha(X) \leq 32 \log n$ and such that every component of
  $G \setminus X$ contains at most one of $a,b$.
\end{theorem}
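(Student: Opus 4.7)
The plan is to reduce the statement to a balanced-separator lemma and iterate it $O(\log n)$ times. Concretely, I would first prove the following \emph{Balanced Separator Lemma}: for every 3PC-free graph $G$ on $m$ vertices and non-adjacent vertices $a, b$ lying in a common component of $G$, there is a set $Y \subseteq V(G) \setminus \{a, b\}$ with $\alpha(Y) \le 32$, such that either $Y$ separates $a$ from $b$, or the component of $G \setminus Y$ containing both $a$ and $b$ has at most $m/2$ vertices. Given this lemma, Theorem~\ref{thm:banana} follows by iteration: starting from $X_0 = \emptyset$, as long as $a$ and $b$ lie in a common component $C_t$ of $G \setminus X_t$, apply the lemma inside $G[C_t]$ to obtain $Y_t$ and set $X_{t+1} = X_t \cup Y_t$. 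Either $a$ and $b$ get separated at some step, or we always have $|C_{t+1}| \le |C_t|/2$; in both cases the process halts within $\log_2 n$ steps, with $\alpha(X) \le \sum_t \alpha(Y_t) \le 32 \log n$.

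To prove the Balanced Separator Lemma, I would pick a shortest induced $a, b$-path $P = a p_1 \cdots p_k b$ in $G$ and exploit two consequences of the 3PC-free hypothesis. First, because $P$ is a shortest path, every vertex $v \in V(G) \setminus V(P)$ has $N(v) \cap V(P)$ contained in a window of at most three consecutive vertices of $P$ (otherwise a shortcut of $P$ exists). Second, theta-freeness forbids three pairwise internally-disjoint induced $a, b$-paths with pairwise anticomplete interiors. Equip every $v \in V(G) \setminus V(P)$ with a ``projection window'' on $P$, and mark $v$ as \emph{far} if $N(v) \cap V(P) = \emptyset$. Assign each $p_i$ the weight $1$ plus the number of vertices whose window is centered at $p_i$, and pick the smallest index $j$ at which the prefix weights exceed $m/2$. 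Then let $Y$ be the union of $\{p_{j-1}, p_j, p_{j+1}\}$ with the vertices projecting to this window; after removing $Y$, the vertices of $V(G) \setminus Y$ split (modulo the far vertices, which attach to whichever side they connect to) into a ``left'' part and a ``right'' part of weight at most $m/2$ each, containing $a$ and $b$ respectively. Hence either $Y$ separates $a$ and $b$, or the $a, b$-component has size at most $m/2$.

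The main obstacle is to bound $\alpha(Y)$ by an absolute constant. A large stable set inside $Y$ would consist of many pairwise non-adjacent vertices all attached to a bounded window of $P$. By routing each such vertex through the long left and right segments of $P$, one obtains many $a, b$-paths whose interiors are nearly disjoint, and I expect a Ramsey-style pigeonhole to then extract three such paths whose interiors are pairwise internally disjoint and pairwise anticomplete, producing a theta, a pyramid, or a generalized prism (depending on the local adjacency structure of the three chosen vertices). Executing this case analysis cleanly, across all three 3PC patterns and simultaneously making the bookkeeping of ``left'', ``right'', and far vertices work, is the technical heart of the argument and is considerably more delicate than the iteration framework built on top of it.
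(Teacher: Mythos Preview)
Your outer iteration---apply a constant-$\alpha$ balanced separator inside the current $a,b$-component and halve in $O(\log n)$ rounds---is sound and is exactly the shape of the paper's argument. The gap is in your Balanced Separator Lemma: the set $Y$ you build does not have bounded independence number, and the argument you sketch for bounding $\alpha(Y)$ cannot be completed. Take the path $a\dd p_1\dd p_2\dd p_3\dd b$ with $k$ pendant leaves $v_1,\dots,v_k$ attached at $p_2$; this is a tree, hence trivially 3PC-free. Every $v_i$ has window $\{p_2\}$, the weight median lands at $j=2$, and your $Y=\{p_1,p_2,p_3,v_1,\dots,v_k\}$ has $\alpha(Y)\ge k$. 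Your routing idea cannot rescue this: the ``many $a,b$-paths'' you propose all share the entire left and right segments of $P$ and differ only in one middle vertex, so they are as far from internally disjoint as possible---and in this example the $v_i$ are leaves, so no $a,b$-path passes through any of them at all. A large stable set attached to a single vertex of $P$ simply does not force a 3PC.

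The paper obtains the lemma by a genuinely different mechanism. It first \emph{strengthens} the statement from a pair of vertices to a pair of disjoint anticomplete \emph{cooperative} subgraphs $H_1,H_2$ (a clique, or a set with a connected interior whose neighbourhood is exactly the boundary), and proves structurally that $\alpha(N(H_1)\cap N(H_2))\le 16$ in any 3PC-free graph; this is where 3PC-freeness actually bites, via a lemma that extracts a subdivided claw or its line graph inside a cooperative subgraph from any three ``private'' attachments. The recursion then strips two layers of common neighbours (cost $\alpha\le 32$), which pushes $H_1,H_2$ to distance $>3$, takes a minimal separator with full components $F_1\supseteq H_1$ and $F_2\supseteq H_2$, and recurses on the smaller side after replacing $H_2$ by $F_2\cup Z$---which is again cooperative. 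The generalisation to cooperative subgraphs is precisely what makes the induction close; a single-vertex hypothesis like yours is too weak to feed back into itself, and the shortest-path window is the wrong object to bound.
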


\subsection{Proof outline and organization}
The proof of Theorem~\ref{treealpha} follows an outline similar to \cite{tw15}, but requires several new
techniques and ideas. We sketch it in this subsection, postponing the precise
definitions for later.
We begin by exploring the effect that the
presence of ``useful wheels'' has on 3PC-free graphs, and show that every
useful wheel can be broken by a cutset that is contained in the
 union of the neighborhoods of three vertices.
This is done in Section~\ref{cutsets}.

For a graph $G$ a function  $w: V(G) \rightarrow [0,1]$ is a {\em normal weight function} on $G$ if $w(V(G))=1$.
Let $c \in [0, 1]$ and let $w$ be a normal weight function on $G$. A set $X \subseteq V(G)$ is a {\em $(w,c)$-balanced separator} if
$w(D) \leq  c$ for every component $D$ of $G \setminus X$. The set $X$ is a {\em $w$-balanced separator} if $X$ is a $(w,\frac{1}{2})$-balanced separator.
We show:
\begin{theorem}
There is an integer  $d$ with the following property.
Let $G \in \mathcal {C}$, and let $w$ be a normal weight function on $G$.
Then there exists $Y \subseteq V(G)$ such that
\begin{itemize}
\item $|Y| \leq d$, and
\item $N[Y]$ is a $w$-balanced separator in $G$.
\end{itemize}
\end{theorem}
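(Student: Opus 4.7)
The plan is to analyze heavy components and invoke the cutset lemma from Section~\ref{cutsets}. Given $G \in \mathcal{C}$ and a normal weight $w$, if every component of $G$ has weight at most $1/2$ then $Y=\emptyset$ works; otherwise, I may pass to the unique component of weight exceeding $1/2$ and assume $G$ is connected with $w(V(G))=1$. For each vertex $v$, let $D_v$ denote the heaviest component of $G \setminus N[v]$. If $w(D_v) \leq 1/2$ for some $v$, take $Y = \{v\}$. The nontrivial case is when $w(D_v) > 1/2$ for every $v$, in which case $D_v$ is unique and avoids $v$.

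Under this assumption I would study the heavy-side map $v \mapsto D_v$. By an exchange argument --- iterating the move $v \mapsto v' \in D_v$ while using a suitable monotone potential to force termination --- one locates two vertices $v, v'$ that straddle each other's heavy side, i.e.\ $v' \in D_v$ and (up to swapping roles and minor adjustments) $v \in D_{v'}$. Such a pair is the raw material for a 3PC: if there are three internally-disjoint $v$--$v'$ paths with sufficiently controlled adjacencies, they form a theta, pyramid, or generalized prism, contradicting $G \in \mathcal{C}$. Hence the $v$--$v'$ connectivity is low, and Menger's theorem provides a cut of size at most two separating them. Taking the closed neighborhood of this cut gives a candidate $N[Y]$ with $|Y|=2$. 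If balance fails, there is a residual heavy component in which I would iterate, and this is where the cutset lemma from Section~\ref{cutsets} enters: any heavy induced subgraph of a $3$PC-free graph that resists a small-neighborhood balanced cut should contain a useful wheel, which by Section~\ref{cutsets} is broken by a cut contained in $N[\{y_1,y_2,y_3\}]$. Adding these three vertices to $Y$ shrinks the heaviest weight, and a constant number of iterations gives $|Y| \leq d$.

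The main obstacle is twofold. First, making the heavy-side exchange argument precise and handling the adjacency edge cases (where $v$ and $v'$ are close, and the three paths between them may need to be modified to obtain a genuine $3$PC rather than a degenerate substructure). Second, and more delicate, is ensuring that each application of the cutset lemma reduces the heaviest weight by a constant multiplicative factor rather than only an additive amount; this requires selecting the useful wheel so that its cut is approximately $w$-balanced within the residual component, presumably by a pigeonhole or averaging argument over the wheel's sectors. Without such a quantitative refinement, the naive iteration would give only an $O(\log n)$ bound, matching Theorem~\ref{thm:banana} rather than yielding the sought absolute constant $d$.
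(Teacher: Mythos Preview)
There is a fundamental gap that precedes the two obstacles you already flag. The Menger step is simply false in $\mathcal{C}$: complete graphs (and many dense graphs built from them) are $3$PC-free yet have arbitrarily high vertex-connectivity, so from ``$v' \in D_v$ and $v \in D_{v'}$'' you cannot deduce any bound on a minimum $v$--$v'$ cut. Three internally vertex-disjoint $v$--$v'$ paths form a $3$PC only when the union of each pair is a \emph{hole}; your qualifier ``with sufficiently controlled adjacencies'' is exactly the hard part, and nothing in the heavy-side exchange produces it. Likewise, the assertion that a heavy subgraph resisting small-neighbourhood balanced cuts must contain a useful wheel is unsupported --- many graphs in $\mathcal{C}$ (chordal graphs, for instance) contain no useful wheel at all, and there is no reason the obstruction should manifest as one.

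The paper's proof takes a completely different route and does not attempt any Menger-type or heavy-side argument. It plugs into the ``amiable/amicable'' framework of~\cite{tw15}: via Ramsey-type and interval-graph arguments (Lemma~\ref{lem:intervalgraph}, Theorem~\ref{thm:connectifier2general}) one shows that a large stable separator between two connected pieces forces long consistent alignments on both sides (Theorem~\ref{thm:twosides_path}), establishing that $\mathcal{C}$ is amiable. Theorem~\ref{thm:wheelseparator} is then used only once, to verify the technical $3$-amicable condition (Theorem~\ref{thm:separatepath}). The balanced-separator conclusion itself is a black box (Theorem~\ref{balancedseptw15}) imported from~\cite{tw15}, whose proof is itself substantial. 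The wheel lemma is an ingredient in one verification step, not the engine of the argument.
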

This is done in  Section~\ref{sec:domsep}; the proof is similar to
the proof of an analogous statement in  \cite{tw15}.

In Section~\ref{banana} we prove Theorem~\ref{thm:banana}.
The key insight here is that  a stronger result can (and should) be
proved, showing that every two
``cooperative subgraphs'', disjoint and anticomplete to each other,
can be separated by removing a set with logarithmic stability number.
The proof of this strengthening follows by relatively standard
structural analysis.

In Section~\ref{sec:smallsep} we develop a technique that
uses  results of
Section~\ref{sec:domsep} and Section~\ref{banana} and produces a
balanced separator of small stability number in a graph. This
technique does not depend on the particular graph-class in question,
but only on the validity of statements similar to Theorems~ \ref{balancedsep}
and \ref{ablogn}.
We also rely on a lemma from
Section~\ref{sec:TheLemma}, which is proved here for theta-free graphs, but
can be generalized in several ways.
Section~\ref{sec:smallsep}
is completely different from \cite{tw15}, and requires several new ideas.

In Section~\ref{sec:theend} we deduce  Theorem~\ref{treealpha} from
the building blocks developed so far.
We finish with Section~\ref{sec:alg} discussing the algorithmic implications of
Theorem~\ref{treealpha}.

\section{Structural results} \label{cutsets}
In this section we prove a theorem  asserting the
existence of certain cutsets in graphs in $\mathcal{C}$.

Let $G$ be a graph. Let $X,Y,Z \subseteq V(G)$. We say that $X$
{\em separates} $Y$ from $Z$ if no component of $G \setminus X$
meets both $Y$ and $Z$.
Let $W$ be a hole in $G$ and $v \in G \setminus W$. A {\em sector} of $(W,v)$ is a path $P$ of $W$ of length at least one, such that both ends of $P$ are  adjacent to $v$, and $v$ is
anticomplete to $P^*$. A sector $P$ is {\em long} if $P^* \neq \emptyset$. 
A {\em useful wheel} in $G$ is a pair $(W,v)$ where $W$ is a hole of length at least seven and $(W,v)$ has at least two long sectors.
We prove:

\begin{theorem}
  \label{thm:wheelseparator}
  Let $G \in \mathcal{C}$ and let $(W,v)$ be a useful  wheel in $G$.
  Let $S$ be a long sector of $W$ with ends $s_1,s_2$. Then
  $((N(s_1) \cup N(s_2)) \setminus W) \cup N(v)$ separates
  $S^*$ from $W \setminus S$.
  \end{theorem}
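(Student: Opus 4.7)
The plan is to argue by contradiction. Let $X = ((N(s_1) \cup N(s_2)) \setminus W) \cup N(v)$ and suppose $X$ fails to separate $S^*$ from $W \setminus S$; choose a shortest induced path $Q$ in $G \setminus X$ with one end $p \in S^*$ and the other $q \in W \setminus S$. Since $N(v) \subseteq X$, the endpoint $q$ lies outside $N(v)$, hence in the interior of some long sector $S' \neq S$ of $(W,v)$. The minimality of $Q$ together with the definition of $X$ forces: $Q \cap W = \{p,q\}$ (any interior vertex of $Q$ lying on $W$ would, being in $G \setminus X$, lie in the interior of some sector, yielding a shorter $S^*$-to-$(W \setminus S)$ path in $G \setminus X$); no vertex of $Q$ is adjacent to $v$; no interior vertex of $Q$ is adjacent to $s_1$ or $s_2$; and $p$ and $q$ are non-adjacent in $G$, as they lie in the interiors of distinct sectors of $W$.

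I then consider the three pairwise internally disjoint $p$-to-$q$ paths in $G[W \cup Q]$: the path $Q$, and the two arcs $A, B$ of $W$ from $p$ to $q$ through $s_1$ and through $s_2$ respectively, each of length at least two. If no vertex of $Q^*$ has a neighbor in $W \setminus \{p,q\}$, then $Q \cup A \cup B$ is a theta with ends $p$ and $q$, contradicting $G \in \mathcal{C}$.

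Otherwise, let $i$ be the smallest index for which $q_i \in Q^*$ has a neighbor in $W \setminus \{p,q\}$, and pick such a neighbor $w^*$. Then $w^* \notin \{s_1, s_2\}$, and, by the minimality of $i$ together with $Q$ being induced, $N(q_j) \cap W \subseteq \{p\}$ for every $j$ with $1 \leq j < i$. The main remaining step is to extract a 3PC from $G[W \cup \{v\} \cup Q]$. My approach is to exhibit three internally disjoint paths with pairwise anticomplete interiors between two suitably chosen branch vertices, using the initial segment $pq_1 \cdots q_i$ of $Q$ extended by the edge $q_i w^*$ as one side, an arc of $W$ together with an edge from $v$ as another, and a route through $v$ via the endpoints of a long sector other than $S$ as the third. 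Two features of the setup make this possible: first, the shortest-path property of $Q$ precludes shortcuts that would create chords in the candidate 3PC; and second, the hypothesis that $(W,v)$ has at least two long sectors provides a long sector of $(W,v)$ distinct from $S$, which supplies the third route from $v$ and the flexibility needed to route it around $q_i$'s other attachments. A careful extremal choice of $w^*$ (e.g.\ the neighbor of $q_i$ at the appropriate end of its attachment interval on the sector containing it) then produces a theta, a pyramid, or a (possibly pinched) prism, contradicting $G \in \mathcal{C}$.

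The principal obstacle is this last step. Since in $3$PC-free graphs $N(q_i) \cap W$ need not be a subpath of $W$ — the ``$3$-wheel'' on a $6$-hole is the canonical example of a $3$PC-free graph with a vertex having three non-consecutive neighbours on a hole — one must split into sub-cases depending on whether the attachments of $q_i$ lie in $S$, in $S'$, or in a third sector of $(W,v)$, and the 3PC configuration must be picked accordingly in each. The interplay between the shortest-path assumption on $Q$ (forbidding chords and rerouting) and the availability of a second long sector of $(W,v)$ (supplying an alternative way around $W$ from $v$) is what ultimately lets the construction be carried through uniformly.
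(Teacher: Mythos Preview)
Your setup and the first case are fine: minimality does give $Q\cap W=\{p,q\}$, interior vertices of $Q$ miss $s_1,s_2,v$, and when $Q^*$ is anticomplete to $W\setminus\{p,q\}$ you do obtain a theta with ends $p,q$. The gap is your second case, which you yourself flag as the ``principal obstacle'' but do not carry out; this is where essentially all of the content of the theorem lives. Two specific problems with the sketch. First, you commit to routing one branch of every candidate 3PC through $v$, but several of the configurations that must be exhibited do not involve $v$ at all---for instance, a single off-$W$ vertex with a unique neighbour $u\in S^*$ and a unique neighbour $t_1\in W\setminus S$ already yields a theta with ends $u,t_1$ entirely inside $W\cup Q$; insisting on using $v$ will miss these. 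Second, picking the first $q_i$ with an attachment and an ``extremal'' $w^*$ does not by itself control in which sector $w^*$ lands, nor how many neighbours $q_i$ has on $W$; the minimality of $Q$ buys less here than you suggest, and you have not indicated how the ensuing cases are resolved.

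For comparison, the paper normalizes differently: it takes a shortest $P=p_1\dd\cdots\dd p_k$ in $G\setminus(W\cup X)$ with $p_1$ attaching to $S^*$ and $p_k$ to $W\setminus S$, and then proves four structural claims in order: $N(p_k)\cap(W\setminus S)\subseteq\{t_1,t_2\}$ (the $W$-neighbours of $s_1,s_2$ outside $S$); $p_1$ has a unique neighbour $u$ in $S$; $k=1$; and finally $N(p)\cap W=\{u,t_1,t_2\}$, which together with the second long sector forces $W\setminus S=t_1\dd w\dd t_2$ and $N(v)\cap W=\{s_1,s_2,w\}$, so that $|W|\geq 7$ produces a last theta. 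Each step exhibits an explicit 3PC when its conclusion fails. Your framework could likely be completed with a comparable case analysis, but as written the proposal stops exactly where the difficulty begins.
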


\begin{proof}
  Let $X=((N(s_1) \cup N(s_2)) \setminus W) \cup N(v)$.
    Suppose for a contradiction that there is a component of $G\setminus X$ intersecting both $S^*$ and $W\setminus S$. It follows that there is a path $P=p_1\dd \dots \dd p_k$ in $G\setminus X$, possibly with $k=1$, such that $p_1$ has a neighbor in $S^*$ and $p_k$ has a neighbor in $W\setminus S$. In particular, $P$ is disjoint from and anticomplete to $\{s_1,s_2,v\}$. 
    
    Choose $P$ with $|P|=k$ as small as possible. It follows that
    \begin{itemize}
        \item we have $P\subseteq G\setminus (W\cup X)$;
        \item $P^*$ is anticomplete to $W\cup \{v\}$;
        \item if $k>1$, then $p_1$ is anticomplete to $W\setminus S^*$ and $p_k$ is anticomplete to $S$.
    \end{itemize}
Let $t_1$ and $t_2$ be the (unique) neighbors of $s_1$ and $s_2$ in $W\setminus S^*$, respectively. Since $(W,v)$ has at least two long sectors, it follows that $s_1,s_2,t_1,t_2$ are all distinct, and that $W\setminus N[S]\neq  \emptyset$. In particular, since $W\cup \{v\}$ is not a pyramid, pinched prism, or theta, it follows that $v$ has a neighbor in $w\in W\setminus N[S]$.

Traversing $S$ from $s_1$ to $s_2$, let $u_1$ and $u_2$ be the first and the last neighbor of $p_1$ in $S$, respectively. It follows that $u_1,u_2\in S^*$. We deduce:

\sta{\label{st:p_knbrs} $N(p_k)\cap (W\setminus S)\subseteq \{t_1,t_2\}$.}

Suppose not. Then there is a path $Q$ in $G$ from $p_k$ to $v$ such that $Q^*\subseteq W\setminus N[S]$. Assume that $u_1=u_2$. Then there is a theta in $G$ with ends $u_1,v$ and paths $u_1\dd S\dd s_1\dd v, u_1\dd S\dd s_2\dd v, u_1\dd p_1\dd P\dd p_k\dd Q\dd v$. Next, assume that $u_1$ and $u_2$ are distinct and non-adjacent. Then there is a theta in $G$ with ends $p_1,v$ and paths $p_1\dd u_1\dd S\dd s_1\dd v, p_1\dd u_2\dd S\dd s_2\dd v, p_1\dd P\dd p_k\dd Q\dd v$. Since $G$ is theta-free, it follows that $u_1,u_2$ are distinct and adjacent. But now there is a pyramid in $G$ with apex $v$, base $p_1u_1u_2$ and paths $p_1\dd P\dd p_k\dd Q\dd v, u_1\dd S\dd s_1\dd v, u_2\dd S\dd s_2\dd v$, a contradiction. This proves \eqref{st:p_knbrs}.

\sta{\label{st:u1=u2} We have $u_1=u_2$.}

Suppose not. By \eqref{st:p_knbrs} and without loss of generality, we may assume that $p_k$ is adjacent to $t_1$.
Assume first that $t_1$ and $v$ are not adjacent. If $u_1$ and $u_2$ are not adjacent either, then there is a theta in $G$ with ends $p_1,s_1$ and paths $p_1\dd u_1\dd S\dd s_1, p_1\dd u_2\dd S\dd s_2\dd v\dd s_1, p_1\dd P\dd p_k\dd t_1\dd s_1$, and if $u_1,u_2$ are adjacent, then there is a pyramid in $G$ with apex $s_1$, base $p_1u_1u_2$ and paths $p_1\dd P\dd p_k\dd t_1\dd s_1, u_1\dd S\dd s_1, u_2\dd S\dd s_2\dd v\dd s_1$. Since $G$ is (theta, pyramid)-free, it follows that $t_1$ and $v$ are adjacent. Assume that $u_1$ and $u_2$ are not adjacent. Then there is a pyramid in $G$ with apex $p_1$, base $s_1t_1v$ and  paths $s_1\dd S\dd u_1\dd p_1, t_1\dd  p_k\dd P\dd p_1, v\dd s_2\dd S\dd u_2\dd p_1$. Again, since $G$ is pyramid-free, it follows that $u_1,u_2$ are adjacent. But now there is a prism in $G$ with triangles $u_1p_1u_2, s_1t_1v$ and paths $u_1\dd S\dd s_1, p_1\dd P\dd p_k\dd t_1, u_2\dd S\dd s_2\dd v$, a contradiction. This proves \eqref{st:u1=u2}.
\medskip

Henceforth, let $u=u_1=u_2$. It follows that:

\sta{\label{st:p1=pk} We have $k=1$.}

Suppose that $k>1$. Since $W \cup \{p_k\}$ is not a theta with ends
$t_1,t_2$, we may assume by
\eqref{st:p_knbrs} and without loss of generality, that $N(p_k)\cap (W\setminus S)= \{t_1\}$. But then $W\cup P$ is a theta in $G$ with ends $u,t_1$, a contradiction. This proves \eqref{st:p1=pk}.

\medskip

Henceforth, let $p=p_1=p_k$.
Since $W \cup \{p\}$ is not a theta with one end $u$ and the other end in
$\{t_1,t_2\}$, \eqref{st:p_knbrs} implies that 
$N(p)\cap W=\{t_1,t_2,u\}$. Recall that $v$ has a neighbor $w\in W\setminus N[S]$.

\sta{\label{st:t1t2} We have $t_1w\in E(G)$ and $t_2v\notin E(G)$. Similarly, we have $t_2w\in E(G)$ and $t_1v\notin E(G)$.}

Suppose not. Then we may assume, without loss of generality, that either $t_1$ and $w$ are not adjacent, or $t_2$ and $v$ are adjacent. In either case, it follows that there is a path $Q$ in $G$ from $p$ to $v$ such that $t_2\in P^*\subseteq W\setminus (S\cup N(t_1))$. Now
there is a theta in $G$ with ends $p,s_1$ and paths $p\dd u\dd S\dd s_1, p\dd t_1\dd s_1, p\dd Q\dd v\dd s_1$, a contradiction.
This proves \eqref{st:t1t2}.
\medskip

We will now finish the proof. From \eqref{st:t1t2}, it follows that $W\setminus S=t_1\dd w\dd t_2$ and $N(v)\cap W=\{s_1,s_2,w\}$. Recall also that $N(p)\cap W=\{t_1,t_2,u\}$. Since $|W|>6$,  it follows either $s_1\dd S\dd u$ or $s_2\dd S\dd u$, say the former, has non-empty interior. But then there is a theta in $G$ with ends $s_1,u$ and paths $s_1\dd S\dd u, s_1\dd t_1\dd p\dd u$ and $s_1\dd v\dd s_2\dd S\dd u$, a contradiction.
\end{proof}

\section{Dominated balanced separators} \label{sec:domsep}

The goal of this section is to prove the following:

\begin{theorem} \label{balancedsep}
There is an integer  $d$ with the following property.
Let $G \in \mathcal {C}$ and let $w$ be a normal weight function on $G$.
Then there exists $Y \subseteq V(G)$ such that
\begin{itemize}
\item $|Y| \leq d$, and
\item $N[Y]$ is a $w$-balanced separator in $G$.
\end{itemize}
\end{theorem}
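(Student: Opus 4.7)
My plan is to prove Theorem \ref{balancedsep} by a potential-based iterative construction of $Y$, with Theorem \ref{thm:wheelseparator} serving as the structural engine that forces termination in a bounded number of rounds, following the template of \cite{tw15}.

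I would start with $Y_0=\emptyset$ and proceed inductively. Given $Y_i$, either $N[Y_i]$ is already a $w$-balanced separator (in which case we stop and output $Y=Y_i$), or there is a unique component $D_i$ of $G\setminus N[Y_i]$ with $w(D_i)>1/2$. The target at each step is to augment $Y_i$ by an absolutely bounded batch of vertices so that the new heaviest component $D_{i+1}$ satisfies $w(D_{i+1})\leq (1-\varepsilon)\,w(D_i)$ for some absolute $\varepsilon>0$. Since $w(D_0)\leq 1$ and we only need to push this below $1/2$, this yields $|Y|\leq d$ for an absolute constant $d$ depending only on $\varepsilon$ and the batch size. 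The easy case is when some $y\in V(G)$ satisfies $w(N[y]\cap D_i)\geq \varepsilon\,w(D_i)$; we simply add $y$ to $Y_i$ and the inductive invariant is preserved.

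The substantive case is when no single vertex's closed neighborhood captures an $\varepsilon$-fraction of $D_i$. In that regime I would argue that $D_i$ contains a useful wheel $(W,v)$, obtained by picking a few pairwise well-separated heavy subsets of $D_i$, joining them via shortest induced paths inside $D_i$, and exploiting 3PC-freeness to force both that the resulting induced cycle has length at least seven and that an appropriately chosen vertex $v$ has at least two long sectors on it. The failure of any heavy local dominator is precisely what keeps the paths long enough and what prevents the hub's neighborhood from swallowing the hole; 3PC-freeness then rules out the competing configurations (theta, pyramid, generalized prism). Once the useful wheel is in hand, Theorem \ref{thm:wheelseparator} applies: adding the at most three vertices $v,s_1,s_2$ (and, if needed, a symmetric triple taken from another long sector) to $Y_i$ yields a closed neighborhood that severs a substantial share of $w(D_i)$, delivering the required geometric drop in the potential.

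The main obstacle, and the only step that is not essentially routine, is the useful-wheel extraction: converting a purely quantitative failure of pointwise domination inside $D_i$ into a structural witness that is simultaneously a long hole and carries a two-long-sector hub. The 3PC-free hypotheses must be wielded carefully, since all three forbidden path-configurations have to be excluded in parallel along the candidate hole in order to place the wheel exactly where Theorem \ref{thm:wheelseparator} can consume it. I expect this step to absorb most of the work, with the surrounding greedy framework running parallel to \cite{tw15} once the wheel-extraction engine is in place.
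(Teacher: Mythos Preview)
Your proposed dichotomy---either some vertex's closed neighborhood captures an $\varepsilon$-fraction of $w(D_i)$, or $D_i$ contains a useful wheel---fails already on a bare cycle $C_n$ with uniform weights: every closed neighborhood has weight $3/n$, and there is no wheel at all since there is no vertex off the hole. So the step you flag as ``the only step that is not essentially routine'' is not merely hard; as stated it is impossible. More generally, the hypothesis ``no vertex is a heavy local dominator'' buys you long induced paths and, with care, a long hole, but it gives no mechanism whatsoever for producing a \emph{hub} with two long sectors; $3$PC-freeness only rules out competing configurations once a candidate hub already exists.

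The paper does not attempt anything of this kind. It plugs $\mathcal{C}$ into the amiable/amicable framework of \cite{tw15}, and the work specific to $\mathcal{C}$ is in two pieces. First, a two-sided alignment theorem (Theorem~\ref{thm:twosides_path}): in the setup $V(G)=D_1\cup D_2\cup Y$ with $Y$ stable, $D_1$ a path, and both $D_i$ connected with $N(D_i)=Y$, one extracts a large $X\subseteq Y$ and paths $H_1\subseteq D_1$, $H_2\subseteq D_2$ on which $X$ sits as a consistent alignment with at least one side wide; this certifies that $\mathcal{C}$ is amiable. Second (Theorem~\ref{thm:separatepath}), the useful wheel is \emph{manufactured} from the alignment data rather than discovered: the hole $W$ is assembled from a subpath of $D_1$, a subpath of $H_2$, and the extreme aligned vertices $x_1,x_7\in X$, and the hub is the middle aligned vertex $x_4$, which by construction has two long sectors on $W$. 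Theorem~\ref{thm:wheelseparator} is then invoked once, on this explicit wheel, to supply the three-vertex set $Z=\{x_4,d_l,z_1\}$ whose closed neighborhood separates the two ends of the $D_1$-subpath, establishing $3$-amicability. The balanced separator itself comes from the black box Theorem~\ref{balancedseptw15}. In short, the wheel appears only after the alignment machinery has placed seven stable vertices in a controlled pattern along two paths; that machinery, not a weight-spreading hypothesis, is what generates the hub.
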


We follow the outline of the proof of Theorem 8.1 in \cite{tw15}.
First we repeat several definitions from \cite{tw15}.
Let $G$ be a graph, let $P=p_1 \dd \dots \dd p_n$ be a path in $G$ and let
$X=\{x_1, \dots, x_k\}  \subseteq G \setminus P$. We say that $(P,X)$ is an
{\em alignment} if 
$N_P(x_1)=\{p_1\}$, $N_P(x_k)=\{p_n\}$, 
every vertex of $X$  has a neighbor in $P$, and there exist
$1< j_2 < \dots< j_{k-1} < j_k =  n$ such that
$N_P(x_i) \subseteq p_{j_i} \dd P \dd p_{j_{i+1}-1}$ for $i \in \{2, \dots, k-1\}$.
We also say that $x_1, \dots, x_k$ is {\em the order on $X$ given by the
  alignment $(P,X)$}.
An alignment $(P,X)$ is {\em wide}  if 
each of $x_2, \dots, x_{k-1}$ has two non-adjacent neighbors in  $P$,
{\em spiky} if  each of $x_2, \dots, x_{k-1}$ has exactly one neighbor in $P$ and
{\em triangular} if 
each of $x_2, \dots, x_{k-1}$ has exactly two neighbors in $P$  and they are
adjacent. An alignment is {\em consistent} if it is
wide, spiky or triangular.

  The first step in the proof of Theorem~\ref{balancedsep} is the following:
  \begin{theorem} 
\label{thm:twosides_path}
  For every integer $x\geq 1$, there exists an integer $\sigma=\sigma(x) \geq 1$ with the following property.
    Let $G \in \mathcal{C}$ and assume that $V(G)=D_1 \cup D_2 \cup Y$
  where
  \begin{itemize}
  \item  $Y$ is a stable set with $|Y| = \sigma$,
  \item $D_1$ and $D_2$ are components of $G \setminus Y$,
  \item $N(D_1)=N(D_2)=Y$,  
  \item $D_1=d_1 \dd \dots \dd d_k$ is a path, and
  \item for every $y \in Y$ there exists $i(y) \in \{1, \dots, k\}$ such
    that $N(d_{i(y)}) \cap Y=\{y\}$.
  \end{itemize}
    Then there exist $X \subseteq Y$ with $|X|=x+2$ and a subpath $H_1$ of $D_1$ as well as $H_2 \subseteq D_2$ such that 
\begin{enumerate}
    \item  $(H_1,X)$ is a consistent alignment, and for every vertex $x$ in $X$ except for at most two of them, $N_{D_1}(x) = N_{H_1}(x)$.
    \item One of the following holds.
    \begin{itemize}
     \item We have $|H_2|=1$ (so $H_2\cup X$ is a star), and $(H_1,X)$ is wide.
        \item $(H_2,X)$ is a consistent alignment, the orders given on X by $(H_1,X)$ and by $(H_2,X)$ are the same, and at least one of $(H_1,X)$ and $(H_2,X)$ is wide.
    \end{itemize}
\end{enumerate}
\end{theorem}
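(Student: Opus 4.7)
The plan is to combine structural constraints coming from 3PC-freeness with a cascade of Ramsey-style reductions, applied successively on the $D_1$-side and the $D_2$-side.

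The first step is to bound the possible shapes of $N_{D_1}(y)$ for $y\in Y$. Since $G$ is theta-free and $D_1$ is an induced path, the neighbourhood of $y$ on $D_1$ is very constrained: for example, if $y$ has neighbours $d_{i_1},d_{i_2},d_{i_3}$ with $i_1<i_2<i_3$ and $d_{i_2-1},d_{i_2+1}$ non-adjacent to $y$, with $i_2-i_1\geq 2$ and $i_3-i_2\geq 2$, then the edge $y\dd d_{i_2}$ together with the paths $y\dd d_{i_1}\dd D_1\dd d_{i_2}$ and $y\dd d_{i_3}\dd D_1\dd d_{i_2}$ (trimmed to be induced) form a theta with ends $y,d_{i_2}$. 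A short case analysis of this kind shows that $N_{D_1}(y)$ falls into one of finitely many possible shapes (a single vertex, an edge, two non-adjacent vertices, or a short combination of such blocks). A pigeonhole on this finite list then yields a subset $Y_0\subseteq Y$, still of size a large function of $x$, in which every element has the same $D_1$-shape.

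The second step extracts the alignment on $D_1$. Using the private-neighbour hypothesis to linearly order $Y_0$ along $D_1$, I thin to a sufficiently spaced subsequence to obtain $X\subseteq Y_0$ of size $x+2$ and a subpath $H_1$ of $D_1$ such that the $H_1$-neighbourhoods of the vertices of $X$ appear in order and without overlap; this makes $(H_1,X)$ a consistent alignment of the common type (wide, spiky, or triangular). The first and last elements of $X$ are precisely the two vertices for which $N_{D_1}\neq N_{H_1}$ is allowed in condition (1).

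Next I turn to $D_2$. Applying an analogous pigeonhole to the shape of $N_{D_2}(y)$ for $y\in X$, I further refine $X$ so that all its elements have the same neighbourhood type in $D_2$. Two cases arise. If the selected shape is ``many vertices of $X$ share a common neighbour'', I pick such a neighbour $h\in D_2$, shrink $X$ to those adjacent to $h$, and set $H_2=\{h\}$; this takes care of the star case provided the alignment $(H_1,X)$ chosen above is wide. Otherwise, using the connectedness of $D_2$, I construct a path $H_2\subseteq D_2$ visiting a chosen $D_2$-neighbour of each $y\in X$; another round of pigeonhole and thinning, together with 3PC-freeness used to restrict the attachments of $X$ to $H_2$, ensures that $(H_2,X)$ is a consistent alignment. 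If necessary I reverse the traversal of $H_2$ so that the induced order on $X$ agrees with the one coming from $(H_1,X)$.

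The main obstacle is the final clause requiring that at least one of $(H_1,X),(H_2,X)$ be wide. I would argue by contradiction: if both alignments are spiky or triangular, then every $y\in X$ has at most two neighbours in each of $H_1,H_2$ and these pairs are either equal or adjacent. Choosing three consecutive vertices $y_a,y_b,y_c\in X$ in the common alignment order and combining the subpaths of $H_1$ and $H_2$ between their attachments with the edges from $y_a,y_b,y_c$ to $H_1$ and $H_2$, one produces a theta, pyramid, or (pinched) prism, contradicting $G\in\mathcal{C}$. Carrying out this case analysis exhaustively over the spiky--spiky, spiky--triangular and triangular--triangular subcases is what drives the constant $\sigma(x)$, and I expect it to be the most delicate part of the proof.
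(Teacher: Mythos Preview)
Your plan has the right high-level shape, but two of its steps conceal real difficulties that the paper handles with specific tools you have not invoked.

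\textbf{Finding $H_2$ inside $D_2$.} You write ``using the connectedness of $D_2$, I construct a path $H_2\subseteq D_2$ visiting a chosen $D_2$-neighbour of each $y\in X$''. But $D_2$ is an arbitrary connected induced subgraph, not a path, so there is no a priori ``shape of $N_{D_2}(y)$'' to pigeonhole on, and connectedness alone does not give you an \emph{induced} path meeting the neighbourhoods of many $y$'s. This is exactly the content of the ``connectifier'' theorem (Theorem~\ref{thm:connectifier2general}, imported from \cite{tw15}): given a large set $Y'$ with neighbours in a connected set, one finds either a path $H$ in which every vertex of a large $Y''\subseteq Y'$ has a neighbour, or a caterpillar/subdivided-star type structure with $Y''$ attaching only at simplicial vertices. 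The paper applies this to $D_2$, then uses 3PC-freeness to rule out the second outcome (combined with the alignment on $H_1$, a caterpillar or its line graph would force a theta, prism or pyramid). Without this lemma your construction of $H_2$ is a genuine gap.

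\textbf{Matching the two orders.} You say ``if necessary I reverse the traversal of $H_2$''. This only works if the $(H_2,X)$-order is either equal to or the reverse of the $(H_1,X)$-order; in general it is an arbitrary permutation. The paper handles this by first making $X'$ of size $(x+2)^2$, obtaining consistent alignments on both sides, and then applying the Erd\H{o}s--Szekeres theorem to extract $X\subseteq X'$ of size $x+2$ on which the two orders agree. Your thinning budget must account for this quadratic loss.

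A smaller point: on the $D_1$ side the paper does not classify the ``shape'' of $N_{D_1}(y)$. Instead it proves the dual bound that every $d\in D_1$ has at most four neighbours in $Y$ (this uses both the private-neighbour hypothesis and a path through $D_2$ to build a theta), and then feeds this into Lemma~\ref{lem:intervalgraph}, which uses perfection of interval graphs to produce the consistent alignment directly. Your shape-classification route is plausible but would require more care than you indicate, since theta-freeness does not by itself bound $|N_{D_1}(y)|$ to a constant. Your final ``at least one side is wide'' step, on the other hand, is essentially what the paper does and should go through once the earlier steps are fixed.
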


The proof of Theorem~\ref{thm:twosides_path} requires two preliminary results.
The first one  is Theorem~\ref{thm:connectifier2general} below from
\cite{tw15}. Following \cite{tw15}, by a {\em caterpillar} we mean a tree $C$ with
  maximum degree three such that  there exists  a path $P$ of $C$ where all
  branch vertices of $C$ belong to $P$.
  (Our definition of a caterpillar is
  non-standard for two reasons: a caterpillar is often allowed to be of
  arbitrary maximum degree, and a spine often contains all vertices of degree
  more than one.) 
A {\em claw} is the graph $K_{1,3}$.
  For a graph $H$, a vertex $v$ of $H$ is
  said to be \textit{simplicial} if $N_H(v)$ is a clique.

\begin{theorem} [{Chudnovsky, Gartland, Hajebi, Lokshtanov, Spirkl; Theorem 5.2 in  \cite{tw15}}]
    
\label{thm:connectifier2general}
For every integer $h \geq 1$, there exists an integer $\mu=\mu(h)\geq 1$ with the following property. Let $G$ be a connected graph.
Let $Y \subseteq G$ such that  $|Y|\geq \mu$, $G \setminus Y$ is connected  and every vertex of $Y$ has a neighbor in $G \setminus Y$.
  Then there is a set $Y' \subseteq Y$ with $|Y'|=h$ and an induced subgraph $H$ of $G \setminus Y$ for which one of the following holds.
  \begin{itemize}
  \item $H$ is a path and every vertex of $Y'$ has a neighbor in $H$.
  \item  $H$ is a caterpillar, or the line graph of a caterpillar, or a subdivided star or the line graph of a subdivided star. Moreover, every vertex of
    $Y'$ has a unique neighbor in $H$ and  every vertex of
      $H \cap N(Y')$ is simplicial in $H$.
          \end{itemize}
\end{theorem}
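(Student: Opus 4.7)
\medskip

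The plan is to apply Theorem~\ref{thm:connectifier2general} to the subgraph $G[D_2\cup Y]$, obtain a path-like or tree-like skeleton through which a large portion of $Y$ attaches to $D_2$ in a controlled way, then transport that information back to $D_1$ using the fact that $D_1$ is a path along which every $y\in Y$ already has a designated private neighbor $d_{i(y)}$, and finally invoke 3PC-freeness to force the two alignments to share the same order and to force at least one of them to be wide. Throughout we keep $\sigma = \sigma(x)$ as large as necessary to survive a constant number of Ramsey-style extractions; pinning down the exact value is a bookkeeping matter, not a structural one.

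First, observe that $G[D_2\cup Y]$ is connected because $D_2$ is a component of $G\setminus Y$ and $N(D_2)=Y$, so every vertex of $Y$ has a neighbor in the connected graph $D_2$. Applying Theorem~\ref{thm:connectifier2general} with $h=\mu^{-1}$-preimage of some constant $h_0=h_0(x)$ sufficiently large yields $Y_0\subseteq Y$ with $|Y_0|=h_0$ and an induced subgraph $H\subseteq D_2$ of one of the two stated shapes. In the caterpillar, subdivided-star, and line-graph-of-each cases, every vertex of $Y_0$ has a unique neighbor in $H$, which is simplicial in $H$. If many members of $Y_0$ share the same simplicial neighbor, pigeonhole gives $Y_1\subseteq Y_0$ attached to a single vertex $h_0\in D_2$, and we are heading for the star case with $H_2=\{h_0\}$. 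Otherwise, we extract a path $H_2$ contained in $H$ that meets the neighborhood of a large subset $Y_1\subseteq Y_0$: for caterpillars and their line graphs the spine already is this path; for subdivided stars and their line graphs we use that 3PC-freeness together with the private-neighbor structure on $D_1$ forbids three distinct branches of the star from simultaneously hosting members of $Y_0$ — otherwise the three branches, joined through the star's center on the $D_2$-side and through a subpath of $D_1$ (via private neighbors) on the $D_1$-side, would produce a theta or pyramid. So in every case we reduce either to the star case or to ``$H_2$ is a path in $D_2$ and every $y\in Y_1$ has a neighbor in $H_2$.''

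Next, in the path case we clean $H_2$ by a standard pigeonhole on the triple (leftmost neighbor on $H_2$, rightmost neighbor on $H_2$, adjacency pattern between them) to extract $Y_2\subseteq Y_1$ and a subpath $H_2'\subseteq H_2$ on which every middle vertex is wide, spiky, or triangular in a uniform way; this produces a consistent alignment $(H_2',Y_2)$ on the $D_2$-side. Then we move to $D_1$: the private neighbors $d_{i(y)}$ give us a linear order on $Y_2$, and choosing $X\subseteq Y_2$ with $|X|=x+2$ so that the indices $i(y)$ are spread out — concretely, leaving enough ``buffer'' private neighbors between consecutive members of $X$ — we take $H_1$ to be a subpath of $D_1$ bounded by suitable buffer vertices, so that every middle vertex $x_i\in X$ has $N_{D_1}(x_i)=N_{H_1}(x_i)$ confined to its own interval of $H_1$. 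A final pigeonhole on the wide/spiky/triangular type for middle vertices of $X$ on $H_1$ (thinning $X$ once more and updating $h_0$ accordingly) yields a consistent alignment $(H_1,X)$.

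The hard part is the last step: matching the order of $X$ on both sides and guaranteeing wideness. The orders on $X$ induced by $(H_1,X)$ and by $(H_2',X)$ must coincide up to global reversal, because otherwise three members $y_a,y_b,y_c\in X$ occurring in different orders on $D_1$ and $D_2$ together with paths running between $y_a$ and $y_c$ through $D_1$ (private neighbors) and through $D_2$ (along $H_2'$), plus the local attachments of $y_b$, would build a theta, pyramid, or prism in $G$, contradicting $G\in\mathcal{C}$; so after possibly reversing $D_1$ we may assume the orders agree. For wideness, in the non-star case we show that both alignments being spiky or triangular is impossible: if each middle $y_b$ had only a unique neighbor or a triangular pair on each side, then the three consecutive members $y_a,y_b,y_c$, the two rungs of the ``ladder'' through $D_1$ between $y_a$ and $y_c$ (avoiding $y_b$'s support) and the analogous rung through $D_2$, together with $y_b$'s narrow attachments, assemble an induced three-path-configuration; carefully choosing which of $y_b$'s neighbors play which role in these paths is the delicate combinatorial work, and is the main obstacle I expect. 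Once wideness is established on at least one side, we are done with the alignment of size $x+2$, with the two endpoints of $X$ being the permitted ``at most two'' exceptions to $N_{D_1}(x)=N_{H_1}(x)$.
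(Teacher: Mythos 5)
You have proved the wrong statement. The theorem you were asked to address, Theorem~\ref{thm:connectifier2general}, is imported: the paper cites it as Theorem~5.2 of~\cite{tw15} and supplies no proof of it here, so there is nothing in this paper to compare your argument against. What you have written is instead a sketch of Theorem~\ref{thm:twosides_path} --- the result that \emph{applies} the connectifier theorem to the pair $(D_2,Y)$ and transports the information back to the path $D_1$. Your opening sentence makes this explicit (``the plan is to apply Theorem~\ref{thm:connectifier2general} to the subgraph $G[D_2\cup Y]$''), and everything that follows (cleaning the $D_2$-side shape into a path or a single vertex, ordering $X$ by the private neighbors $d_{i(y)}$ on $D_1$, matching the two orders, forcing one side to be wide) is the content of Theorem~\ref{thm:twosides_path}, not of Theorem~\ref{thm:connectifier2general}.

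For what it is worth, your sketch of Theorem~\ref{thm:twosides_path} does broadly track the paper's argument, with one inversion: the paper first bounds $|N(d_i)\cap Y|\le 4$ and invokes Lemma~\ref{lem:intervalgraph} on the $D_1$-side to extract a consistent alignment $(H_1,Y_1)$, and only then applies the connectifier theorem to $G[Y_1\cup D_2]$; you apply the connectifier theorem to $G[Y\cup D_2]$ first and clean up $D_1$ afterwards. The paper also uses Lemma~\ref{lem:intervalgraph} a second time on the $D_2$-side path and then Erd\H{o}s--Szekeres to match orders, steps which in your sketch are folded into informal pigeonhole language. None of this, however, addresses the statement you were assigned: a proof of Theorem~\ref{thm:connectifier2general} itself would have to construct a path, caterpillar, subdivided star, or a line graph of one of these, inside a connected graph $G\setminus Y$, attached to many vertices of $Y$ in the controlled way the theorem describes --- a different task entirely, and one this paper outsources to~\cite{tw15}.
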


The second one is:

\begin{lemma}\label{lem:intervalgraph}
    Let $c,x\geq 1$ be integers. Let $G$ be a theta-free graph and assume that $V(G)=D_1 \cup D_2 \cup Y$
  where
  \begin{itemize}
  \item $Y$ is a stable set with $|Y| = (3x+2)(c+2)$;
  \item $D_1$ and $D_2$ are components of $G \setminus Y$;
  \item $N(D_1)=N(D_2)=Y$;  
  \item $D_1$ is a path; and
  \item for every $d\in D_1$, we have  $|N(d) \cap Y|\leq c$.
  \end{itemize}
    Then there exist $X \subseteq Y$ with $|X|=x+2$ and a subpath $H_1$ of $D_1$ such that: 
    \begin{itemize}
        \item 
    $(H_1,X)$ is a consistent alignment.
        \item For all but at most two vertices of $X$, all their neighbors in $D_1$ are contained in $H_1$. 
    \end{itemize}
\end{lemma}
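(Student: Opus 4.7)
The plan is to combine a type classification of $Y$ with a Dilworth-style argument on an interval poset, using theta-freeness to bound antichains. For each $y\in Y$, let $L(y)$ and $R(y)$ denote the smallest and largest indices $i$ with $d_i\in N(y)$, and call $y$ \emph{spiky} if $|N(y)\cap D_1|=1$, \emph{triangular} if $|N(y)\cap D_1|=2$ and the two neighbors are adjacent in $D_1$, and \emph{wide} otherwise (so $R(y)\geq L(y)+2$; every $y$ is of exactly one type). Define a partial order on $Y$ by $y\prec y'$ iff $R(y)<L(y')$; chains correspond to $y$'s with pairwise disjoint intervals $[L(y),R(y)]$, and antichains to $y$'s with pairwise intersecting intervals.

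The main step, and the only place theta-freeness is used, is the claim that every antichain has size at most $c+1$. Given an antichain $A\subseteq Y$, Helly's theorem for intervals on a line yields a common vertex $d_*\in D_1$ lying in $[L(y),R(y)]$ for every $y\in A$. Every spiky or triangular $y\in A$ is adjacent to $d_*$ (contributing at most $c$ to $|A|$ by hypothesis), and every wide $y\in A$ is either adjacent to $d_*$ or satisfies $L(y)<*<R(y)$ with $yd_*\notin E(G)$. I will show that there can be at most one wide vertex of $A$ in the latter case. Suppose for a contradiction that two such, $y_1$ and $y_2$, exist: then each has a neighbor in both $D_1^-:=\{d_1,\dots,d_{*-1}\}$ and $D_1^+:=\{d_{*+1},\dots,d_{|D_1|}\}$, and both have neighbors in the connected set $D_2$. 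I pick induced paths $P_L,P_R,P_D$ from $y_1$ to $y_2$ inside $G[\{y_1,y_2\}\cup D_1^-]$, $G[\{y_1,y_2\}\cup D_1^+]$, and $G[\{y_1,y_2\}\cup D_2]$ respectively; since $Y$ is stable, each has length at least two. Their interiors lie in the three sets $D_1^-$, $D_1^+$, $D_2$, which are pairwise disjoint and pairwise anticomplete in $G$: the two $D_1$-parts are separated in the path $D_1$ by $d_*$ (so any edge between them would be a non-consecutive edge of $D_1$, which does not exist), and $D_1$ is anticomplete to $D_2$. Therefore $P_L\cup P_R\cup P_D$ is a theta with ends $y_1,y_2$, contradicting theta-freeness.

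Given the antichain bound, Dilworth's theorem partitions $Y$ into at most $c+1$ chains, so pigeonhole yields a chain $y^1\prec y^2\prec\cdots\prec y^m$ with $m\geq|Y|/(c+1)=(3x+2)(c+2)/(c+1)>3x+2$. I fix the first $3x+2$ of these elements and apply pigeonhole on the three types to the middle $3x$ elements $\{y^2,\dots,y^{3x+1}\}$: some type $\tau$ accounts for at least $x$ of them, say $z^{(1)},\dots,z^{(x)}$ listed in increasing order of $L$. Set $X=\{y^1,z^{(1)},\dots,z^{(x)},y^{3x+2}\}$ and $H_1=d_{R(y^1)}\dd d_{R(y^1)+1}\dd\cdots\dd d_{L(y^{3x+2})}$, a subpath of $D_1$, and write $p_k=d_{R(y^1)+k-1}$ so that $p_1=d_{R(y^1)}$ and $p_n=d_{L(y^{3x+2})}$.

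To verify that $(H_1,X)$ is a consistent alignment with the stated properties, note that $y^1$ has its $D_1$-neighbors in $[L(y^1),R(y^1)]$, so its unique neighbor in $H_1$ is $d_{R(y^1)}=p_1$; symmetrically $N_{H_1}(y^{3x+2})=\{p_n\}$. Each $z^{(i)}$ has $[L(z^{(i)}),R(z^{(i)})]$ strictly inside $(R(y^1),L(y^{3x+2}))$, so all its $D_1$-neighbors lie in $H_1$; the chain relation $R(z^{(i-1)})<L(z^{(i)})$ then places successive interior neighborhoods in disjoint consecutive sub-intervals of $H_1$, as witnessed by $j_i:=L(z^{(i-1)})-R(y^1)+1$ for $i\in\{2,\dots,x+1\}$. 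The common type $\tau$ makes the alignment uniformly spiky, triangular, or wide on its interior, and $y^1,y^{3x+2}$ are the only two vertices of $X$ whose $D_1$-neighborhoods can extend outside $H_1$, matching the ``at most two'' exception in the conclusion. The main obstacle throughout is the antichain bound via the theta construction; once it is in hand, the rest is a careful choice of $H_1$ and $X$ together with a routine verification.
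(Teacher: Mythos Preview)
Your proof is correct and is essentially the paper's argument: the paper builds the interval graph on $Y$ and uses its perfectness to find either a $(c{+}2)$-clique (ruled out by exactly the same theta through $D_1^-$, $D_1^+$, $D_2$ that you construct) or a $(3x{+}2)$-stable set, and then extracts a consistent alignment by pigeonhole on the three neighbourhood types. Your Dilworth formulation on the interval poset is the dual of this (chains $\leftrightarrow$ stable sets, antichains $\leftrightarrow$ cliques), and your verification of the alignment axioms and of the ``all but two'' clause is in fact more explicit than the paper's.
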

\begin{proof}
  For every vertex $y\in Y$, let $P_y$ be the path in $D_1$ such that $y$ is complete to the ends of $P_y$ and anticomplete to $D_1\setminus P_y$. Let $I$ be the graph with $V(I)=Y$, such that two distinct vertices $y,y'\in Y$ are adjacent in $I$ if and only if $P_y\cap P_{y'}\neq \emptyset$. Then $I$ is an interval graph, and so by \cite{Golumbic}, $I$ is perfect. Since $|V(I)|=(3x+2)(c+2)$,
  we deduce that $I$ contains either a clique of cardinality $c+2$ of a stable set of cardinality $3x+2$. 
    
  Assume that $I$ contains a clique of cardinality $c+2$. Then there exists $C\subseteq Y$ with $|C|=c+2$ and $d\in D_1$ such that $d \in P_y$ for every $y \in C$. It follows that for every $y\in C$, either $y$ is adjacent to $d$, or $D_1\setminus d$ has two components and $y$ has a neighbor in each of them.
  Since $|N(d) \cap Y| \leq c$,
  we deduce that there are two vertices $y,y'\in C\subseteq Y$ as well as two paths $P_1$ and $P_2$ from $y$ to $y'$ with disjoint and anticomplete interiors contained in $D_1$. On the other hand, since $D_2$ is connected and $N(D_2)=Y$, it follows that there is a path $P_3$ in $G$ from $y$ to $y'$ whose interior is contained in $D_2$. But now there is a theta in $G$ with ends $y,y'$ and paths $P_1, P_2, P_3$, a contradiction.

    We deduce that $I$ contains a stable set $S$ of cardinality $3x+2$. From the definition of $I$, it follows that there is a subpath $H_1$ of $D_1$ such that $(H_1, S)$ is an alignment. Since every vertex of $S$, except for the fist and last,  either has two non-adjacent neighbors in $H_1$, or has exactly one neighbor in $H_1$, or has exactly two neighbors in $H_1$ and
  they are adjacent,  there exists $X\subseteq S\subseteq Y$ with $|X|=x+2$ such that $(H_1,X)$ is a consistent alignment. This completes the proof of Lemma~\ref{lem:intervalgraph}.
\end{proof}

We are now ready to prove Theorem~\ref{thm:twosides_path}:

\begin{proof}[Proof of Theorem~\ref{thm:twosides_path}]
    Let $\sigma(x)=18\mu(3((x+2)^2+1)(x+1))$, where $\mu(\cdot)$ comes from Theorem~\ref{thm:connectifier2general}. We begin with the following:

    \sta{\label{st:fewnbrs} Every vertex in $D_1$ has at most four neighbors in $Y$.}

   Suppose for a contradiction that for some $i\in \{1,\dots, k\}$, there is a subset $Z\subseteq Y$ of cardinality five such that $d_i$ is complete to $Z$. It follows that for every $y\in Z$, we have $i(y)\neq i$, and so there is $3$-subset $T$ of $Z$ such that either $i(y)<i$ for all $y\in T$ or $i<i(y)$ for all $y\in T$. Consequently, there are two distinct vertices $y,y'\in T\subseteq Z\subseteq Y$ for which $d_i$ is disjoint from and anticomplete to $d_{i(y)}\dd D_1\dd d_{i(y')}$. On the other hand, since $D_2$ is connected and $N(D_2)=Y$, it follows that there is a path $Q$ in $G$ from $y$ to $y'$ whose interior is contained in $D_2$. But now there is a theta in $G$ with ends $y,y'$ and paths $y\dd d_{i(y)}\dd D_1\dd d_{i(y')}\dd y',y\dd d_i\dd y', Q$, a contradiction. This proves \eqref{st:fewnbrs}.

   \medskip

   From \eqref{st:fewnbrs}, Lemma~\ref{lem:intervalgraph} and the choice of $\sigma(x)$, it follows that:

    \sta{\label{st:alignmentonD_1} There exists $Y_1\subseteq Y$ with $|Y_1|=\mu(3((x+2)^2+1)(x+1))$ and a subpath $H_1$ of $D_1$ such that $(H_1,Y_1)$ is consistent alignment.}

    Henceforth, let $Y_1$ be as in \eqref{st:alignmentonD_1}. Since $G_1=G[Y_1\cup D_2]$ and $G_1\setminus Y_1=D_2$ are both connected, we can apply Theorem~\ref{thm:connectifier2general} to $G_1$ and $Y_1$. It follows that there is a set $Y' \subseteq Y_1$ with $|Y'|=3((x+2)^2+1)(x+1)$ and an induced subgraph $H$ of $G_1 \setminus Y_1=D_2$ for which one of the following holds.
  
    \begin{itemize}
    \item $H$ is a path and every vertex of $Y'$ has a neighbor in $H$.
    \item  $H$ is a caterpillar, or the line graph of a caterpillar, or a subdivided star or the line graph of a subdivided star. Moreover, every vertex of
    $Y'$ has a unique neighbor in $H$ and  every vertex of
      $H \cap N(Y')$ is simplicial in $H$.
      \end{itemize}

    Assume that the second bullet above holds. By \eqref{st:alignmentonD_1}, $(H_1,Y')$ is a consistent alignment. We will show that we get a theta or a prism or a pyramid. Assume first that $(D_1, Y')$ is spiky.  If $H$ is a caterpillar or a subdivided star,  we get a theta with one end in $D_1$ and the other one in $D_2$; and 
    if $H$ is the  line graph of a caterpillar, we get a pyramid with base in
    $H$ and apex in $D_1$.
    Next assume that  $(D_1, Y')$ is triangular.  If $H$ is a caterpillar or a subdivided star,  we get a pyramid  with apex in  $D_2$ and base consisitng of
    two vertices of $D_1$ and  a vertex of $Y'$; and
    if $H$ is the  line graph of a caterpillar, we get a prism
    one of whose triangles in in $H$, and the other consists of two vertices of $D_1$ and a vertex of $Y'$.
    Finally assume that $(D_1, Y')$ is wide.
    If $H$ is a caterpillar or a subdivided star,  we get a theta
    with one end in $Y'$ and the other end in $D_2$, and if 
    if $H$ is the  line graph of a caterpillar, we get a pyramid with
    apex in $Y'$ and base in $H$.
    Thus in all cases we get a contradiction.
    It follows that $H$ is indeed a path and every vertex of $Y'$ has a neighbor in $H$.

Now, assume that some vertex in $z\in H$ has at least $x$ neighbors in $Y'$. Choose $X\subseteq N(z)\cap Y'\subseteq Y$ with $|X|=x$. Let $H_2=\{z\}$. By \eqref{st:alignmentonD_1}, $(H_1,X)$ is a consistent alignment. Note that if $(H_1,X)$ is spiky, then $H_1\cup X\cup \{z\}$ contains a theta, and if $(H_1,X)$ is triangular, then $H_1\cup X\cup \{z\}$ contains a pyramid. Therefore, $(H_1,X)$ is wide. But now $X$ and $H_2$ satisfy Theorem~\ref{thm:twosides_path}.

Therefore, we may assume that every vertex in $H$ has fewer than $x$ neighbors in $Y'$. Let $H_2=H$. Since $|Y'|=3((x+2)^2+1)(x+1)$, it follows from Lemma~\ref{lem:intervalgraph} that there exists $X'\subseteq Y'$ with $|X'|=(x+2)^2$ such that $(H_2, X')$ is a consistent alignment. Also, by \eqref{st:alignmentonD_1}, $(D_1,X')$ is a consistent alignment. This, along with the Erd\H{o}s-Szekeres Theorem
\cite{ES}, implies that there exists $X\subseteq X'\subseteq Y'\subseteq Y$ with $|X|=x+2$ such that both $(H_1,X)$ and $(H_2,X)$ are consistent alignments, and the orders given on $X$ by $(H_1,X)$ and $(H_2,X)$ are the same. Moreover, since $G$ is (theta, pyramid, pinched prism)-free, it follows that at least one of $(H_1,X)$ and $(H_2,X)$ is wide. Hence, $X$ and $H_2$ satisfy Theorem~\ref{thm:twosides_path}. This completes the proof.
\end{proof}

Now, as in \cite{tw15}, we will show that the class $\mathcal{C}$ is
``amiable'' and ``amicable'', and then use Theorem~8.6 of \cite{tw15}
to complete to the proof. The details are below.
In \cite{tw15}, a graph class $\mathcal{G}$ is said to be \textit{amiable} if, under the same assumptions as that of Theorem~\ref{thm:twosides_path} for a graph $G\in \mathcal{G}$, there exists $X\subseteq Y$ with $|X|=x+2$, $H_1 \subseteq D_1$ and $H_2\subseteq D_2$ satisfying one of several possible outcomes. In particular, the outcome of Theorem~\ref{thm:twosides_path} is one of the possible outcomes in the definition of an amiable class. Therefore, from Theorem~\ref{thm:twosides_path}, we deduce that:
\begin{corollary}\label{cor:twopath}
  The class $\mathcal{C}$ is amiable.
\end{corollary}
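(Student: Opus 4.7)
The plan is to deduce this corollary directly from Theorem~\ref{thm:twosides_path}, since the definition of \emph{amiable} from \cite{tw15} is precisely designed to capture the type of two-sided alignment structure that Theorem~\ref{thm:twosides_path} produces. Concretely, a class is amiable if, whenever the hypotheses of Theorem~\ref{thm:twosides_path} are met, a subset $X \subseteq Y$ of size $x+2$ and induced subgraphs $H_1 \subseteq D_1$, $H_2 \subseteq D_2$ exist that satisfy one of a fixed list of outcomes, and one of the items on that list matches the conclusion of Theorem~\ref{thm:twosides_path} verbatim (namely: $(H_1, X)$ is a consistent alignment along $D_1$ matching the neighborhoods of all but at most two vertices of $X$, and either $H_2$ is a single vertex with $(H_1,X)$ wide, or $(H_2,X)$ is a consistent alignment with the same order on $X$ and at least one of the two alignments is wide).

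The proof I would write would therefore have essentially no content beyond invoking the previous theorem. First, I would quote (or cite precisely) the definition of amiable class from \cite{tw15} so the reader can check which of the several outcomes in that definition the conclusion of Theorem~\ref{thm:twosides_path} produces. Second, I would check that the hypotheses assumed in the definition of amiable are exactly the hypotheses of Theorem~\ref{thm:twosides_path}, that is: $G \in \mathcal{C}$, $V(G) = D_1 \cup D_2 \cup Y$ with $Y$ a stable set of prescribed size, $D_1, D_2$ components of $G \setminus Y$ with $N(D_1) = N(D_2) = Y$, $D_1$ a path, and every $y \in Y$ with a private neighbor in $D_1$. Third, I would match the conclusion of Theorem~\ref{thm:twosides_path} to the corresponding outcome in the definition.

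The only mild subtlety is bookkeeping: the integer $\sigma(x)$ supplied by Theorem~\ref{thm:twosides_path} must be at least as large as the threshold called for in the definition of amiable in \cite{tw15}; since amiability only requires the existence of \emph{some} function $\sigma$, we may take our $\sigma$ to be the maximum of the two (or simply note that the threshold in the definition can be replaced by any larger value without harm). I do not expect this corollary to present a real obstacle; the main technical content is already carried by Theorem~\ref{thm:twosides_path} and by the two auxiliary inputs it relies on (Theorem~\ref{thm:connectifier2general} from \cite{tw15} and the interval-graph argument of Lemma~\ref{lem:intervalgraph}). The corollary is stated separately only because amiability is the property in the abstract form required by Theorem~8.5 of \cite{tw15}, which will be used later to obtain the dominated balanced separator in Theorem~\ref{balancedsep}.
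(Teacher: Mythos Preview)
Your proposal is correct and matches the paper's own proof: the paper simply observes that the conclusion of Theorem~\ref{thm:twosides_path} is one of the permitted outcomes in the definition of an amiable class from \cite{tw15}, so the corollary is immediate. Your remark about the threshold $\sigma$ is harmless bookkeeping, and the paper does not even bother to mention it.
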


Following \cite{tw15}, for an integer $m>0$, a graph class $\mathcal{G}$ is said to be \textit{$m$-amicable} if $\mathcal{G}$ is amiable, and the following holds for every graph $G \in \mathcal{G}$.  Let $\sigma$ be as in the definition of an amiable class (and so as in Theorem~\ref{thm:twosides_path}) for $\mathcal{G}$ and let $V(G)=D_1\cup D_2\cup Y$ such that $D_1=d_1 \dd \dots \dd d_k, D_2$ and $Y$ satisfy the assumptions of Theorem \ref{thm:twosides_path} with $|Y| = \sigma(7)$. Let $X\subseteq Y$, $H_1\subseteq D_1$ and $H_2\subseteq D_2$ be as  in Theorem \ref{thm:twosides_path} with $|X|=9$, and let $\{x_1, \dots, x_7\} \subseteq X$ such that: 
\begin{itemize}
    \item $x_1, \dots, x_7$ is the order given on $\{x_1, \dots, x_7\}$ by $(H_1,X)$; and 
    \item For every $x \in \{x_1, \dots, x_7\}$, we have $N_{D_1}(x) = N_{H_1}(x)$.
\end{itemize} Let $i$ be maximum such that $x_1$ is adjacent to $d_i$, and let $j$ be minimum such that $x_7$ is adjacent to $d_j$. Then there exists a subset $Z \subseteq D_2 \cup \{d_{i+2}, \dots, d_{j-2}\} \cup  \{x_4\}$
    with $|Z| \leq m$  such that $N[Z]$ separates  $d_i$ from $d_j$.  Consequently, $N[Z]$ separates $d_1 \dd D_1 \dd d_i$ from
    $d_j \dd D_1 \dd d_k$. 
    We prove:
    
 \begin{theorem}
    \label{thm:separatepath}
    The class $\mathcal{C}$ is $3$-amicable.
 \end{theorem}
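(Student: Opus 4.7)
The plan is to build a useful wheel $(W, x_4)$ in $G$ and then apply Theorem~\ref{thm:wheelseparator} to a single carefully chosen long sector; its two endpoints together with the apex $x_4$ will form the required set $Z$ of size at most $3$. The hole $W$ will consist of a subpath of $H_1$ from $d_i$ to $d_j$, connected through $x_1$ and $x_7$ to either the unique vertex of $H_2$ or a subpath of $H_2$. The reason $x_4$ is the right apex is that, by disjointness of the intervals in the consistent alignment $(H_1, X)$ with $|X|=9$ and since the order $x_1,\ldots,x_7$ respects the alignment order, the leftmost $H_1$-neighbor $y_L$ and the rightmost $H_1$-neighbor $y_R$ of $x_4$ both lie in $\{d_{i+3},\ldots,d_{j-3}\}$; in particular $j-i\geq 6$, so $y_L$ is a legal choice for an element of $Z$.

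To construct $W$, I would distinguish the two outcomes of Theorem~\ref{thm:twosides_path}(2). If $H_2=\{h\}$ and $(H_1,X)$ is wide, set
\[ W := x_1 \dd d_i \dd d_{i+1} \dd \cdots \dd d_j \dd x_7 \dd h \dd x_1. \]
Otherwise $(H_2,X)$ is a consistent alignment with the same order on $X$; let $e_L$ be the $H_2$-neighbor of $x_1$ closest to $x_7$'s $H_2$-neighborhood, let $e_R$ be the $H_2$-neighbor of $x_7$ closest to $x_1$'s $H_2$-neighborhood, and set
\[ W := x_1 \dd d_i \dd d_{i+1} \dd \cdots \dd d_j \dd x_7 \dd e_R \dd H_2 \dd e_L \dd x_1. \]
In either case, $W$ is an induced cycle of length at least $7$ (anticompleteness of $D_1$ and $D_2$ and stability of $Y$ kill the cross-chords, and the extremal choices of $d_i,d_j,e_L,e_R$ kill any chords from $x_1$ or $x_7$), $x_4\notin W$, and the two arcs of $W$ through $x_1$ and through $x_7$ are long sectors at $x_4$ (since $d_i,x_1,d_j,x_7$ all lie in their interiors and none of these is a neighbor of $x_4$). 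So $(W,x_4)$ is a useful wheel.

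Next I would apply Theorem~\ref{thm:wheelseparator} to the long sector $S$ of $(W,x_4)$ that contains $x_1$ in its interior: its endpoints are $s_1 := y_L$ on the $H_1$-side and $s_2$ equal to $h$ or to the $x_4$-neighbor on the $H_2$-arc of $W$ closest to $x_1$. Setting $Z := \{s_1, s_2, x_4\}$, the theorem yields that $N[Z]$ separates $S^*$ from $W\setminus S$; since $d_i\in S^*$ and $d_j\in W\setminus S$, it separates $d_i$ from $d_j$. The containment $Z\subseteq D_2\cup\{d_{i+2},\ldots,d_{j-2}\}\cup\{x_4\}$ is then immediate from the book-keeping step (giving $y_L\in\{d_{i+3},\ldots,d_{j-3}\}$) together with $h,e_L\in H_2\subseteq D_2$.

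The main obstacle I anticipate is not conceptual but bookkeeping-heavy: I must verify that $W$ is induced, that $S$ really is a sector (no other $x_4$-neighbor sneaks into its interior), and that $d_i$ and $d_j$ actually end up on opposite arcs. All of this will follow from disjointness of the alignment intervals and the extremal definitions of $y_L,e_L,e_R$, but it requires tracking the $H_1$- and $H_2$-orderings on $X$ simultaneously and being careful about the direction in which $H_2$ is traversed inside $W$.
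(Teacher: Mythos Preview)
Your proposal is correct and follows essentially the same approach as the paper. The paper's hole $W$, the sector $S$ containing $x_1$, and the set $Z=\{x_4,d_l,z_1\}$ are exactly your $W$, your sector through $x_1$, and your $\{x_4,y_L,s_2\}$; the paper simply treats the single-vertex $H_2$ case as a degenerate path rather than splitting into two cases.
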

\begin{proof}
By Corollary~\ref{cor:twopath}, $\mathcal{C}$ is an amiable class. With same notation as in the definition of a $3$-amicable class, our goal is to show that there exists a subset
    $Z \subseteq D_2 \cup \{d_{i+2}, \dots, d_{j-2}\} \cup  \{x_4\}$
    with $|Z| \leq 3$  such that $N[Z]$ separates  $d_i$ from $d_j$.
    Consequently, $N[Z]$ separates $d_1 \dd D_1 \dd d_i$ from
    $d_j \dd D_1 \dd d_k$.

    Let $l\in \{1,\ldots, k\}$ be minimum such that $x_4$ is adjacent to $d_l$, and let $m\in \{1,\ldots, k\}$ be maximum such that $x_4$ is adjacent to $d_m$. It follows that $i+2<l\leq m< j-2$.
    
    Let $R$ be the (unique) path in $H_2$ with ends $r_1,r_2$ (possibly $r_1=r_2$) such that $x_1$ is adjacent to $r_1$ and anticomplete to $R\setminus \{r_1\}$ and $x_7$ is adjacent to $r_2$ and anticomplete to $R\setminus \{r_2\}$. Then $x_4$ has a neighbor in $R$. Traversing $R$ from $r_1 $ to $r_2$, let $z_1$ and $z_2$ be the first and the last neighbor of $x_4$ in $R$.
    
    Let $W=d_i\dd D_1\dd d_j\dd x_7\dd r_2\dd R\dd r_1\dd x_1\dd d_i$. Then $W$ is a hole in $G$  and $|W| \geq 7$. It follows that $(W,x_4)$ is a useful wheel in $G$. In particular, $S=d_l\dd D_1\dd d_i\dd x_1\dd r_1\dd R\dd z_1$ and $S'=d_m\dd D_1\dd d_j\dd x_7\dd r_2\dd R\dd z_2$ are two long sectors of $(W,x_4)$.
    
    Note that $d_l$ and $z_1$ are the ends of the sector $S$ from $(W,x_4)$. Let $Z=\{x_4,d_l,z_1\}$. Then we have $Z\subseteq D_2 \cup \{d_{i+2}, \dots, d_{j-2}\} \cup  \{x_4\}$,
    $d_i\in S^*\setminus N[Z]$ and $d_j\in W\setminus (S\cup N[Z])$. Hence, by Theorem~\ref{thm:wheelseparator}, $N[Z]$ separates $d_i$ from $d_j$, as desired.
\end{proof}
The following is a restatement of Theorem~8.6 of \cite{tw15}:

\begin{theorem}[Chudnovsky, Gartland, Hajebi, Lokshtanov, Spirkl \cite{tw15}]\label{balancedseptw15}
For every integer $m>0$ and every $m$-amicable graph class $\mathcal{G}$, there is an integer $d>0$ with the following property. Let $\mathcal{G}$ be a graph class which is $m$-amicable. Let $G \in \mathcal {C}$ and let $w$ be a normal weight function on $G$.
Then there exists $Y \subseteq V(G)$ such that
\begin{itemize}
\item $|Y| \leq d$, and
\item $N[Y]$ is a $w$-balanced separator in $G$.
\end{itemize}
\end{theorem}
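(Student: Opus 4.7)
The plan is to follow the strategy of Theorem~8.5 of \cite{tw15}, of which this is a direct restatement. I maintain a candidate set $Y \subseteq V(G)$, initialized empty. At each iteration, if $N[Y]$ is already a $w$-balanced separator, I stop; otherwise, there is a unique component $D$ of $G \setminus N[Y]$ with $w(D) > 1/2$, and the task is to add at most $m + O(1)$ new vertices to $Y$ so as to either produce a balanced separator or strictly decrease a combinatorial potential measuring the ``badness'' of the configuration.

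The core step is to extract from inside $D$ a configuration matching the hypothesis of Theorem~\ref{thm:twosides_path}. Concretely, I locate an induced path $D'_1 \subseteq D$ together with a connected subset $D'_2 \subseteq D \setminus D'_1$, both attached to a common large set $Y_0$, and then prune $Y_0$ down to a stable subset of size $\sigma(7)$ by a Ramsey-type argument, arranging that every $y \in Y_0$ has a private neighbor on $D'_1$. This is exactly the two-sided path setup of Theorem~\ref{thm:twosides_path}. The existence of such a configuration whenever $D$ is heavy enough comes from a standard BFS/connectivity analysis inside $D$, augmented by the fact that attachments of $D'_1$ to the rest of $D$ cannot be too tangled in $\mathcal{C}$ (this is where earlier structural results intervene). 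Applying the definition of $m$-amicability then produces a set $Z$ with $|Z| \leq m$, contained in $D'_2$ together with a middle portion of $D'_1$ and a single vertex of $Y_0$, whose closed neighborhood separates two chosen endpoints of the aligned segment of $D'_1$. Replacing $Y$ by $Y \cup Z$ delivers the desired progress.

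The main obstacle is ensuring that $|Y|$ remains bounded by an \emph{absolute} constant $d$ rather than something like $O(\log n)$. A naive weight-halving argument on $w(D)$ would only yield a logarithmic bound. The technique in \cite{tw15} is to choose $Y$ not arbitrarily but as a minimizer of a carefully designed lexicographic potential: roughly, the pair consisting of $w(D)$ quantized to a discrete scale and an auxiliary combinatorial parameter (such as the ``alignment type'' of the witness path inside $D$). One then shows that each application of the amicability step strictly decreases this potential while adding only $O(m)$ vertices. Since the potential takes only boundedly many values in total, the iteration terminates after a number of steps depending only on $m$, which yields the absolute constant $d$ asserted in the theorem. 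Combining this general statement with Theorem~\ref{thm:separatepath}, which establishes that $\mathcal{C}$ is $3$-amicable, gives the bound for $\mathcal{C}$ needed later in the paper.
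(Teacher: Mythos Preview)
The paper does not prove this theorem at all: it is stated as a restatement of Theorem~8.5 of \cite{tw15} and used as a black box, so there is no ``paper's own proof'' to compare your sketch against. Your proposal is effectively an attempt to reconstruct the argument of \cite{tw15}, not to match anything in the present paper.

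That said, your sketch has a genuine gap at the crucial point. You correctly identify that a naive weight-halving argument only gives $|Y| = O(\log n)$, and you assert that \cite{tw15} gets an absolute constant via a ``carefully designed lexicographic potential'' whose first coordinate is $w(D)$ ``quantized to a discrete scale'' and whose second coordinate is some unspecified ``alignment type''. But this does not explain anything: quantizing a real number to a discrete scale still leaves it with unboundedly many possible values unless the scale is coarse, and you give no reason why each amicability step decreases $w(D)$ by at least a fixed amount (indeed it need not, since $Z$ is chosen inside $D$ and removing $N[Z]$ may split off only a tiny piece of the weight). Nor do you specify what the auxiliary combinatorial parameter is or why it takes only boundedly many values. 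The actual mechanism in \cite{tw15} for obtaining a bound independent of $n$ is more delicate than what you have written; as it stands, your sketch does not contain the idea that makes the absolute bound work, only the recognition that such an idea is needed.
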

Now Theorem~\ref{balancedsep} is immediate from Theorems~\ref{thm:separatepath} and \ref{balancedseptw15}.

\section{Separating a pair of vertices} \label{banana}

The goal of this section is to prove the following:

\begin{theorem}
  \label{ablogn}
  Let $G \in \mathcal{C}$ with $|V(G)|=n$, and let $a,b \in V(G)$ be
  non-adjacent. Then there is a set $X \subseteq V(G) \setminus \{a,b\}$
  with $\alpha(X) \leq 16 \times 2 \log n$ and such that every component of
  $G \setminus X$ contains at most one of $a,b$.
  \end{theorem}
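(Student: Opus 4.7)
The plan is to prove a strengthening of Theorem~\ref{ablogn} by induction on $n = |V(G)|$, in the spirit hinted at in the proof outline: rather than separating two non-adjacent vertices $a,b$, we will separate two disjoint, anticomplete \emph{cooperative} subsets $A, B \subseteq V(G)$. The notion of cooperative should be chosen so that $\{a\}$ and $\{b\}$ qualify (recovering the theorem) and so that the class of cooperative subsets is stable under the restrictions forced by the recursion. The most natural candidate is to require that each of $A$ and $B$ be dominated by a single vertex of $G$, so that $N(A)$ is controlled by $N[a^\star]$ for the dominating $a^\star$ and, in particular, obstructions like thetas between $A$ and $B$ carry useful structural information.

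For the inductive step we apply Theorem~\ref{balancedsep} to a carefully chosen normal weight function $w$: roughly, half of the mass on a dominator $a^\star$ of $A$ and half on a dominator $b^\star$ of $B$, possibly combined with a uniform background so that component counts decrease as well. Theorem~\ref{balancedsep} produces a set $Y \subseteq V(G)$ with $|Y| \le d$ whose closed neighborhood $N[Y]$ is a $w$-balanced separator; in particular, $a^\star$ and $b^\star$ cannot lie in a common component of $G \setminus N[Y]$. The crucial move is that we do \emph{not} add $N[Y]$ to our separator (its stability number is uncontrolled); we add only $Y$, contributing at most $d$ to the stability number of the final $X$.

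To finish, we must kill the paths from $A$ to $B$ that run through $N(Y)\setminus Y$. Here the cooperative hypothesis pays off: on each side of $N[Y]$, what remains of $A$ (or $B$) is still dominated by the same vertex, and the boundary set $N[y] \cap V_B$ (for each $y\in Y$) is itself cooperative, being dominated by $y$. Some structural analysis — in the spirit of the wheel arguments of Section~\ref{cutsets}, combined with the 3PC-freeness ruling out the usual theta/pyramid/prism configurations built from an $A$-$B$ path crossing $N[Y]$ at multiple points — lets us reduce to a \emph{single} recursive instance on a subgraph that is strictly smaller by a constant factor. This yields a recurrence of the form $f(n) \le f(n/2) + O(d)$, which solves to $f(n) = O(\log n)$ with a constant tunable to the stated $32 \log n$.

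The main obstacle I anticipate is precisely this single-branching requirement: if the recursion were to branch into $|Y|$ calls per level, the stability-number bound would blow up to $\Omega(n)$ rather than $O(\log n)$. Avoiding the blow-up means that at each level the leftover $|Y|$-many boundary sets must be handled either by absorbing them into the next-level cooperative pair or by a direct structural argument using 3PC-freeness, rather than by separate recursive calls. Choosing the "cooperative" definition so that this absorption is possible — while still including the two-vertex case $A=\{a\}, B=\{b\}$ — is the key design decision, and the rest of the proof should reduce to relatively standard structural analysis once this definition is in place.
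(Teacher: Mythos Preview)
Your plan has the right shape---prove a stronger inductive statement about separating two ``cooperative'' sets---but both the definition you propose and the inductive engine you reach for are off-target, and the gap you flag as the ``main obstacle'' is real and is not resolved by anything in the proposal.

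First, ``dominated by a single vertex'' is too rigid to survive the recursion. Whatever set you feed back into the induction on the $B$-side will be some boundary layer together with the component behind it, and there is no reason such a set is dominated by one vertex. The paper's definition is different and is exactly what makes the recursion close up: $H$ is cooperative if it is a clique, or if $H\setminus\delta(H)$ is connected and every vertex of $\delta(H)$ has a neighbor in $H\setminus\delta(H)$. With this definition, if $Z$ is a minimal separator with full components $F_1,F_2$, then $F_2\cup Z$ is automatically cooperative (its boundary is $Z$, and $F_2$ is connected and covers $Z$), which is precisely what the inductive step needs.

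Second, Theorem~\ref{balancedsep} is the wrong tool here, and the paper does not use it for this result. Invoking it would pin the per-level cost to the unspecified constant $d$ rather than $32$; more seriously, adding only $Y$ and not $N[Y]$ leaves up to $d$ neighborhoods to cut through, and you give no mechanism to collapse those into a single recursive call. The paper instead proves directly that for disjoint anticomplete cooperative $H_1,H_2$ one has $\alpha(N(H_1)\cap N(H_2))\le 16$ (Lemmas~\ref{coopH} and~\ref{commonnbrs}, a short subdivided-claw/line-graph analysis). Peeling off two such common-neighbor layers, at cost $\le 2\cdot 16$ in $\alpha$, forces $\dist(H_1,H_2)\ge 4$; then a minimal separator $Z$ at distance $\ge 2$ from both sides exists, and one recurses once on the smaller full component against the enlarged cooperative set $F_2\cup Z$, halving $n-|H_1|-|H_2|$. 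Your proposal is missing all three ingredients: the workable cooperative notion, the common-neighborhood bound that replaces balanced separators, and the minimal-separator trick that produces a single child in the recursion.
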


We need the following result from \cite{prismfree}.

\begin{lemma}[Abrishami, Chudnovsky,              Dibek, Vu\v{s}kovi\'c \cite{prismfree}]\label{minimalconnected}
Let $x_1, x_2, x_3$ be three distinct vertices of a graph $G$. Assume that $H$ is a connected induced subgraph of $G \setminus \{x_1, x_2, x_3\}$ such that $V(H)$ contains at least one neighbor of each of $x_1$, $x_2$, $x_3$, and that $V(H)$ is minimal subject to inclusion. Then, one of the following holds:
\begin{enumerate}[(i)]
\item For some distinct $i,j,k \in  \{1,2,3\}$, there exists $P$ that is either a path from $x_i$ to $x_j$ or a hole containing the edge $x_ix_j$ such that
\begin{itemize}
\item $V(H) = V(P) \setminus \{x_i,x_j\}$; and
\item either $x_k$ has two non-adjacent neighbors in $H$ or $x_k$ has exactly two neighbors in $H$ and its neighbors in $H$ are adjacent.
\end{itemize}

\item There exists a vertex $a \in V(H)$ and three paths $P_1, P_2, P_3$, where $P_i$ is from $a$ to $x_i$, such that 
\begin{itemize}
\item $V(H) = (V(P_1) \cup V(P_2) \cup V(P_3)) \setminus \{x_1, x_2, x_3\}$;  
\item the sets $V(P_1) \setminus \{a\}$, $V(P_2) \setminus \{a\}$ and $V(P_3) \setminus \{a\}$ are pairwise disjoint; and
\item for distinct $i,j \in \{1,2,3\}$, there are no edges between $V(P_i) \setminus \{a\}$ and $V(P_j) \setminus \{a\}$, except possibly $x_ix_j$.
\end{itemize}

\item There exists a triangle $a_1a_2a_3$ in $H$ and three paths $P_1, P_2, P_3$, where $P_i$ is from $a_i$ to $x_i$, such that
\begin{itemize}
\item $V(H) = (V(P_1) \cup V(P_2) \cup V(P_3)) \setminus \{x_1, x_2, x_3\} $; 
\item the sets $V(P_1)$, $V(P_2)$ and $V(P_3)$ are pairwise disjoint; and
\item for distinct $i,j \in \{1,2,3\}$, there are no edges between $V(P_i)$ and $V(P_j)$, except $a_ia_j$ and possibly $x_ix_j$.
\end{itemize}
\end{enumerate}
\end{lemma}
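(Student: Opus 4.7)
The plan is to extract a small skeleton of shortest paths inside $H$ and then use the minimality of $V(H)$ to show that $H$ coincides with that skeleton, with the three target outcomes corresponding to different ways the skeleton can be assembled. Set $A_i := N_H(x_i)$ for $i \in \{1,2,3\}$; each $A_i$ is non-empty by hypothesis. The first observation I would record, and use throughout, is that every $v \in V(H)$ is \emph{essential}: either $H - v$ is disconnected, or $A_i = \{v\}$ for some $i$. Otherwise $V(H) \setminus \{v\}$ would be a strictly smaller connected induced subgraph meeting all three $A_i$'s, contradicting minimality.

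Next, I would build the skeleton. Pick a shortest path $Q$ in $H$ from $A_1$ to $A_2$, with ends $a_1 \in A_1$ and $a_2 \in A_2$; being shortest, $Q$ is chordless in $G$ and has no internal vertex in $A_1 \cup A_2$. Then pick a shortest path $R$ in $H$ from $A_3$ to $V(Q)$, with ends $a_3 \in A_3$ and $b \in V(Q)$; again $R$ is chordless, has no internal vertex in $A_3$, and meets $V(Q)$ only at $b$. Since $Q \cup R$ is a connected induced subgraph of $H$ meeting all three $A_i$'s, the minimality of $V(H)$ forces $V(H) = V(Q) \cup V(R)$.

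Now I would split into cases according to where $x_3$ attaches. First, suppose $R$ has length zero, so $a_3 \in V(Q)$ and $V(H) = V(Q)$ forms an $a_1$-$a_2$ path. If $|A_3| \geq 2$, using the shortest-path choice of $Q$ and the fact that every vertex of $V(Q)$ is essential, I would show that the neighbors of $x_3$ on $Q$ are either two non-adjacent vertices or exactly two adjacent vertices, yielding outcome (i) with $\{i,j\}=\{1,2\}$; the ``hole'' alternative of (i) arises precisely when $x_1x_2 \in E(G)$. If instead $|A_3|=1$, setting $a := a_3$ together with the paths $x_1 \dd a_1 \dd Q \dd a_3$, $x_2 \dd a_2 \dd Q \dd a_3$, $x_3 \dd a_3$ delivers outcome (ii). Second, suppose $R$ has length at least $1$. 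If $b$ is internal on $Q$, take $a := b$ and the three paths $x_1 \dd a_1 \dd Q \dd b$, $x_2 \dd a_2 \dd Q \dd b$, $x_3 \dd a_3 \dd R \dd b$; depending on whether the three neighbors of $a, b$-side meet in a triangle, this gives outcome (ii) or outcome (iii). If $b = a_1$ (symmetrically $b = a_2$), then $V(H) = V(Q) \cup V(R)$ is essentially two chordless paths sharing the endpoint $a_1$, and I would re-root by repeating the construction starting from a different pair, landing again in (ii) or (iii) based on whether a triangle emerges at the junction.

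The main obstacle I anticipate is verifying the no-edges-between-paths conditions required by outcomes (ii) and (iii), namely that $V(P_i) \setminus \{a\}$ (or $V(P_i)$ in case (iii)) are pairwise anticomplete except for the listed exceptions. Any hypothetical forbidden edge either contradicts the chordlessness of $Q$ or $R$, or yields a shorter path that would let me drop some vertex of $V(H)$ without breaking connectivity to any $A_i$, violating minimality. Executing this cleanly requires a careful case split on which of the paths $P_1,P_2,P_3$ the endpoints of the offending edge lie on, and repeated use of the essentiality of every vertex of $H$. The delicate bookkeeping is around junctions coinciding with some $a_i$ and around degenerate configurations (e.g., $Q$ of length one, or the apex/triangle collapsing into $A_i$'s), but this is purely combinatorial and does not require any further graph-theoretic input.
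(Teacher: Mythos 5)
This lemma is not proved in the paper at all: it is quoted verbatim as Lemma~4.1 from the reference \texttt{prismfree} (Chudnovsky et al.\ on prism-free graphs), so there is no in-paper proof to compare against. Judged on its own, your sketch takes the natural shortest-path-skeleton approach and the overall plan is sound: extract a shortest $A_1$--$A_2$ path $Q$ and a shortest $A_3$--$V(Q)$ path $R$, observe that minimality forces $V(H)=V(Q)\cup V(R)$, and then read off the outcome from where the attachment vertex $b$ sits. The ``essential vertex'' observation is the right tool, and the key claim that any stray edge either contradicts chordlessness of $Q$ or $R$, or allows a vertex of $H$ to be dropped, can indeed be pushed through (for instance, if a vertex $r\in V(R)\setminus\{b\}$ has a neighbour $q\in V(Q)$ then $r$ must be the penultimate vertex of $R$ and $q$ must be adjacent to $b$ on $Q$, on pain of dropping the interval of $Q$ between $b$ and $q$; and if $x_1$ had a second neighbour on $R$ one could drop the whole $a_1$--side of $Q$).

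Two points deserve flagging. First, the part you call the ``main obstacle'' really is the entire content of the lemma; the sketch postpones it rather than closing it, and the case work at the junction (which of the two neighbours of $b$ on $Q$ the penultimate vertex $r^*$ of $R$ may be adjacent to, and why it cannot be adjacent to both when $b$ is internal) is exactly where the argument is delicate but does go through. Second, there is a small error in the $b=a_1$ subcase: you assert it lands in (ii) or (iii), but outcome (i) can also arise there. When $r^*$ is adjacent only to $a_1$ in $Q$, the set $V(H)$ is a single chordless path from $a_2$ to $a_3$, and nothing rules out $x_1$ having a second neighbour on $R\setminus\{a_1\}$ (this does not violate minimality, since $a_1$ is a cut vertex); in that event $|A_1|\geq 2$ and you are in outcome (i) with $k=1$, not (ii). The re-rooting you describe handles it, but the target outcome is (i) or (ii) rather than (ii) or (iii). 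With that correction, and with the junction case analysis written out, the proposal gives a correct self-contained proof.
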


For a graph $G$ and  two  subsets $X,Y \subseteq V(G)$
we define {\em the distance in $G$ between $X$ and $Y$} as the length (number of edges) of the shortest
path of $G$ with one end in $X$ and the other in $Y$. We denote the distance
between $X$ and $Y$ by $\dist_G(X,Y)$. Thus $X$ and $Y$ are disjoint
if and only if $\dist_G(X,Y)>0$, and 
$X,Y$ are anticomplete to each other if and only if $\dist_G(X,Y) >1$. 
In order to prove Theorem~\ref{ablogn} we will prove a stronger statement.
Let $H \subseteq G$. We denote by $\delta_G(H)$ the set of vertices of $H$ that
have a neighbor in $G \setminus H$ (so $\delta(H)=N(G \setminus H)$). We say
that $H$ is {\em cooperative}
if one of the following holds:
\begin{itemize}
\item $H$ is a clique, or
\item   $N_H(H \setminus \delta(H))=\delta(H)$ and
  $H \setminus \delta(H)$ is connected.
\end{itemize}

The following lemma summarizes the property of cooperative subgraphs that is of interest to us.

\begin{lemma}
  \label{coopH}
  Let $H \subseteq G$ be cooperative and let 
  $\{n_1,n_2,n_3\}$ be a stable set in $N(H)$.
  Assume that there exist distinct $h_1,h_2,h_3 \in \delta(H)$ such that
  $n_ih_j$ is an edge if and only if $i=j$.
  Then there is $K \subseteq H \cup \{n_1,n_2,n_3\}$
  such that
  \begin{enumerate}
  \item $K$ is a subdivided claw or the line graph of a subdivided claw
  \item $\{n_1,n_2,n_3\}$ is the set of simplicial vertices of $K$.
  \end{enumerate}
\end{lemma}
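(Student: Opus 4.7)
The plan is to split on which clause of the cooperativity definition applies to $H$. If $H$ is a clique, then $\{h_1,h_2,h_3\}$ is a triangle in $H$, and the induced subgraph on $K=\{h_1,h_2,h_3,n_1,n_2,n_3\}$ consists of exactly this triangle together with the three pendant edges $h_in_i$ (using stability of $\{n_1,n_2,n_3\}$ and the hypothesis $n_ih_j\in E(G)\iff i=j$); this is the line graph of a subdivided claw with three legs of length two, whose simplicial vertex set is precisely $\{n_1,n_2,n_3\}$.

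Otherwise, by cooperativity, $H^\circ:=H\setminus\delta(H)$ is nonempty and connected, and each $h_i$ has a neighbor in $H^\circ$. I would take a minimal connected induced subgraph $C\subseteq H^\circ$ containing at least one neighbor of each $h_i$, and apply Lemma~\ref{minimalconnected} with $x_i=h_i$ and its ``$H$'' taken to be $C$. The key observation is that $V(C)\subseteq H\setminus\delta(H)$ has no neighbor outside $H$; in particular, $C$ is anticomplete to $\{n_1,n_2,n_3\}$. Consequently, once a suitable structure is located inside $C\cup\{h_1,h_2,h_3\}$ and the pendants $n_ih_i$ are attached, the only edges that could introduce a chord in the induced subgraph on the candidate $K$ are edges among $\{h_1,h_2,h_3\}$ themselves.

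Outcomes (ii) and (iii) of Lemma~\ref{minimalconnected} supply three internally-disjoint paths from an apex $a$ (resp.\ from a triangle $a_1a_2a_3$) to $h_1,h_2,h_3$, which together with the pendants $n_ih_i$ directly form the desired subdivided claw (resp.\ line graph of a subdivided claw) when no ``optional'' edge $h_ih_j$ is present. Each extra edge is handled by rerouting: if all three of $h_1h_2,h_1h_3,h_2h_3$ are edges I set $K=\{h_1,h_2,h_3,n_1,n_2,n_3\}$; if exactly two are present, say $h_1h_2$ and $h_1h_3$, then $K$ is the subdivided claw centered at $h_1$ with legs $h_1n_1$, $h_1h_2n_2$, $h_1h_3n_3$; and if exactly one, say $h_1h_2$, then I center $K$ at $h_1$ with legs $h_1n_1$ and $h_1h_2n_2$ and route the third leg through $P_1$ (and, in case (iii), through the triangle edge $a_1a_3$) out to $n_3$, so that the unused path $P_2$ and its apex $a_2$ are dropped and none of the lemma's remaining optional edges can reappear in $K$.

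The main obstacle is outcome (i) of Lemma~\ref{minimalconnected}, where $V(C)=V(P)\setminus\{h_i,h_j\}$ for an induced path or hole $P$ between two of the $h$'s, and the third vertex $h_k$ is attached through either two non-adjacent or exactly two adjacent neighbors in $C$. Here I would center a subdivided claw at $h_k$ (or at $h_i$ when $P$ is a hole, so that the edge $h_ih_j$ itself serves as an internal edge of the $n_j$-leg), and route the three legs through the leftmost and rightmost $h_k$-neighbors on $C$. Even when $h_k$ has many additional neighbors in $C$, those intermediate vertices lie strictly between the chosen extremes and are simply excluded from $K$, so they create no chord; the residual risk of an edge $h_kh_i$ or $h_kh_j$ is defused by replacing the affected long leg by the length-two shortcut $h_kh_\ell n_\ell$. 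In the exactly-two-adjacent-neighbors case I would instead use the line graph of a subdivided claw whose triangle is $h_kp_sp_{s+1}$, again shortcutting whenever $h_k$ is adjacent to $h_i$ or $h_j$. The remaining bookkeeping---ruling out chords between the three legs---then follows from the induced path/hole structure furnished by Lemma~\ref{minimalconnected} together with the anticompleteness of $C$ to $\{n_1,n_2,n_3\}$.
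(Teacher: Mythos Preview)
Your overall strategy is close to the paper's, but there is a concrete gap in your treatment of outcome~(ii) of Lemma~\ref{minimalconnected} when exactly one edge among $\{h_1,h_2,h_3\}$ is present, say $h_1h_2$. You center $K$ at $h_1$ with legs $h_1n_1$, $h_1h_2n_2$, and $h_1\dd P_1\dd a\dd P_3\dd h_3\dd n_3$, arguing that dropping $P_2$ eliminates all chords because ``none of the lemma's remaining optional edges can reappear in $K$''. That sentence accounts only for the edges $h_ih_j$; it says nothing about the adjacency between $h_2$ and the apex $a$. If $P_2$ has length one---and nothing forbids this, since $a\in H\setminus\delta(H)$ and $h_2\in\delta(H)$ can certainly be adjacent---then $h_2$ (on leg two) is adjacent to $a$ (on leg three), and your $K$ is not an induced subdivided claw. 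The gap is repairable with further sub-cases (center at $h_2$ instead when $h_1a\notin E(G)$; use the triangle $ah_1h_2$ for a line-graph construction when both $h_1a,h_2a\in E(G)$), but as written the argument is incomplete. Your outcome~(i) hole sub-case is similarly underspecified: you center at $h_i$ yet route ``through the leftmost and rightmost $h_k$-neighbours'', and the proposed shortcut $h_kh_\ell n_\ell$ does not emanate from the chosen center.

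The paper sidesteps all of this rerouting by disposing of the edges among $\{h_1,h_2,h_3\}$ \emph{before} invoking Lemma~\ref{minimalconnected}. The triangle and two-edge configurations are handled directly on $\{h_1,h_2,h_3,n_1,n_2,n_3\}$, exactly as you do; the single-edge case (say $h_1h_3\in E(G)$) is handled not via Lemma~\ref{minimalconnected} but by a shortest path from $h_2$ through $H\setminus\delta(H)$ to a neighbor of $\{h_1,h_3\}$, which immediately gives a subdivided claw or its line graph with no possibility of stray chords. Lemma~\ref{minimalconnected} is then applied only when $\{h_1,h_2,h_3\}$ is stable: the optional $x_ix_j$ edges are absent, the hole sub-case of outcome~(i) cannot occur, and outcomes~(ii) and~(iii) deliver the desired $K$ directly, with no rerouting needed.
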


\begin{proof}

  If $\{h_1,h_2,h_3\}$ is a triangle, then
  $\{n_1,n_2,n_3, h_1,h_2,h_3\}$ is the line graph of a
  subdivided claw and the lemma holds. This we may assume that
  at least one pair $h_ih_j$ is non-adjacent, and in particular
  $H$ is not a clique.
  It follows that  $N_H(H \setminus \delta(H))=\delta(H)$ and
  $H \setminus \delta(H)$ is connected.
  
  Next suppose that $h_1h_2$ and $h_2h_3$ are edges.
  Then $h_1h_3$ is not an edge, and
  $\{n_1,n_2,n_3, h_1,h_2,h_3\}$ is  a
  subdivided claw and the lemma holds.
  Thus we may assume that at most one of the pairs $h_ih_j$
  is an edge.

  Suppose $h_1h_3$ is an edge. Let $P=p_1 \dd \dots \dd p_k$ be path
such that $p_1=h_2$, $p_k$ has a neighbor in $\{h_1,h_3\}$, 
   and  $P \setminus p_1  \subseteq H \setminus \delta(H)$;
choose $P$ with $k$  as small as possible.
If $p_k$ is adjacent to exactly one of $h_1,h_3$,
then $P \cup \{h_1,h_2,h_3,n_1,n_2,n_3\}$ is a subdivided claw;
and if $p_k$ is adjacent to both $h_1$ and $h_3$,
then $P \cup \{h_1,h_2,h_3,n_1,n_2,n_3\}$ is the line graph of
a subdivided claw; in both cases the lemma holds.
Thus we may assume that $\{h_1,h_2,h_3\}$ is a stable set.

Let $R$ be a minimal connected subgraph of $H \setminus \delta(H)$ such that each of $h_1,h_2,h_3$
has a neighbor in $R$.  We apply Lemma~\ref{minimalconnected}. Suppose that the  first outcome holds; we may assume that $R$ is path from $h_1$ to $h_2$.
If $h_3$ has two non-adjacent neighbors in $R$, then 
$R \cup \{h_1,h_2,h_3,n_1,n_2,n_3\}$ contains a  subdivided claw; and
if $h_3$ has exactly two neighbors in $R$ and they are adjacent, then
$R  \cup \{h_1,h_2,h_3,n_1,n_2,n_3\}$ is the line graph of a subdivided claw;  in both cases the theorem holds. If the second outcome of
Lemma~\ref{minimalconnected} holds, then $R  \cup \{h_1,h_2,h_3,n_1,n_2,n_3\}$ is a subdivided claw; and if the third outcome of
Lemma~\ref{minimalconnected} holds, then $R  \cup \{h_1,h_2,h_3,n_1,n_2,n_3\}$ is the line graph of a subdivided claw. Thus in all cases the lemma holds.
\end{proof}

We also need the following:
\begin{lemma}\label{commonnbrs}
  Let $G \in \mathcal{C}$ and let $H_1,H_2$
  be cooperative subgraphs of $G$, disjoint and anticomplete to each other.
  Then $\alpha(N(H_1) \cap N(H_2)) < 17$.
  \end{lemma}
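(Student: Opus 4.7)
I would argue by contradiction: suppose $S \subseteq N(H_1) \cap N(H_2)$ is a stable set with $|S| \ge 17$. The plan is to extract a triple $\{n_1, n_2, n_3\} \subseteq S$ such that, for each $i \in \{1, 2\}$, Lemma~\ref{coopH} applied to $H_i$ will produce an induced subgraph $K^{(i)} \subseteq H_i \cup \{n_1, n_2, n_3\}$ that is a subdivided claw or the line graph of a subdivided claw with $\{n_1, n_2, n_3\}$ as its set of simplicial vertices. Because $H_1$ and $H_2$ are disjoint and anticomplete, the induced subgraph of $G$ on $V(K^{(1)}) \cup V(K^{(2)})$ meets exactly in $\{n_1, n_2, n_3\}$, and I would finish by a case analysis showing the result is a 3PC: two subdivided claws give a theta (with the two centers as the degree-three ends); one of each gives a pyramid (center as apex, triangle as base); and two line graphs give a generalized prism (the two triangles). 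This contradicts $G \in \mathcal{C}$.

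The combinatorial heart of the proof is the extraction step. For each $i \in \{1, 2\}$ I would set up the bipartite graph $B_i$ with parts $S$ and $\delta(H_i)$, and call a 3-subset $T \subseteq S$ \emph{$i$-good} if it admits either (a) distinct private witnesses $h_1, h_2, h_3 \in \delta(H_i)$ with $h_j n_k \in E(G)$ if and only if $j = k$ (the hypothesis of Lemma~\ref{coopH}) or (b) a common neighbor $h \in \delta(H_i)$ adjacent to all of $T$, in which case the induced claw $\{h, n_1, n_2, n_3\}$ is itself a subdivided claw with simplicial vertices $\{n_1, n_2, n_3\}$ and can serve as $K^{(i)}$ in the gluing. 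A first combinatorial step is to show that if every 3-subset of some $S' \subseteq S$ is $i$-\emph{bad}, then $|S'| \le 3$: indeed, $i$-badness of $T$ forces $|N_{B_i}(y) \cap T| \in \{0, 2\}$ for every $y \in \delta(H_i)$, and pushing this constraint across several triples of $S'$, together with the hypothesis that every $n \in S$ has a neighbor in $\delta(H_i)$, becomes untenable once $|S'| \ge 4$. The plan is then to 2-color $\binom{S}{3}$ by which $B_i$ fails, and use a Ramsey-style selection on 3-uniform hypergraphs to locate, inside an $|S| \ge 17$ universe, a 4-element monochromatic subset, contradicting the preceding bound.

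The main obstacle will be this extraction. On one hand, Lemma~\ref{coopH} demands full private witnesses, while on the other, natural bipartite configurations such as $N_{B_i}(n_j) = \delta(H_i) \setminus \{h_j\}$ show that neither $B_1$ nor $B_2$ alone forces the desired triple to exist; the argument must exploit both together, and the constant $17$ is calibrated precisely to the above counting. A secondary subtlety is that the gluing $V(K^{(1)}) \cup V(K^{(2)})$ must really induce a chord-free 3PC: the only possible chords are edges from some $n_j$ to a vertex on a path of $K^{(3-i)}$ not passing through $n_j$. I would eliminate these by choosing each $K^{(i)}$ with $|V(K^{(i)})|$ minimum, since any such extraneous chord would enable a strict shortening of a path in $K^{(3-i)}$, either contradicting minimality or directly exposing a smaller 3PC inside $G$.
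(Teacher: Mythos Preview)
Your extraction step has a genuine gap. The claim that ``$i$-badness of $T$ forces $|N_{B_i}(y)\cap T|\in\{0,2\}$ for every $y\in\delta(H_i)$'' is false: $i$-badness rules out a common neighbour (degree $3$) but does \emph{not} rule out degree $1$, since the failure of type~(a) only says that not all three members of $T$ simultaneously have private witnesses, not that none of them does. More importantly, the conclusion you draw from it---that a $4$-set $S'$ with all triples $i$-bad is impossible---is also false. Take $\delta(H_i)=\{y_1,y_2\}$ with $y_1$ adjacent to $n_1,n_2$ and $y_2$ adjacent to $n_3,n_4$, and $S'=\{n_1,n_2,n_3,n_4\}$. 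No $y_j$ sees three of $S'$, so no triple is type~(b). In any triple containing $\{n_1,n_2\}$, neither $n_1$ nor $n_2$ has a private witness (their only neighbour $y_1$ sees both), and symmetrically for any triple containing $\{n_3,n_4\}$; since every triple from $S'$ contains one of these pairs, all four triples are $i$-bad. This $4$-set can sit inside a $17$-element $S$ (let $n_5,\dots,n_{17}$ attach to fresh vertices of $\delta(H_i)$), so your Ramsey step would produce exactly such a monochromatic $4$-set without contradiction.

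The paper avoids this by an asymmetric two-stage degree argument rather than a symmetric Ramsey-on-triples argument. First it looks only at the $H_1$ side: if some $h_1\in H_1$ has five neighbours in the stable set, one finishes directly (a common neighbour in $H_2$ yields a theta; otherwise three privates in $H_2$ feed Lemma~\ref{coopH} and gluing with the single vertex $h_1$ gives a theta or pyramid). Otherwise every vertex of $H_1$ has at most four neighbours in the $17$-set, and a greedy selection (using $17>4\cdot 4$) produces \emph{five} private pairs on the $H_1$ side. Only then does one turn to $H_2$: among these five $n_i$'s, either some $k\in H_2$ is adjacent to three of them, or (again by a short greedy argument using max degree $\le 2$ on five vertices) three of them have private witnesses in $H_2$. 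Either way one gets $K_2$, and Lemma~\ref{coopH} applied to the already-secured $H_1$-privates gives $K_1$; the union is a $3$PC. Your gluing analysis and the observation that a common neighbour can replace the Lemma~\ref{coopH} output are both correct; it is the simultaneous extraction on both sides that cannot be made to work with the lemma you stated.
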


\begin{proof}
  Suppose there is a stable set $N \subseteq N(H_1) \cap N(H_2)$
    with $|N|=17$.
  Suppose first that some vertex $h_1 \in H_1$ has at least five
  neighbors in $N$; let $n_1, \dots, n_5 \in N \cap N(h_1)$.
  If some $h_2 \in H_2$ has three three neighbors in
  $\{n_1, \dots, n_5\}$, say $\{n_1,n_2,n_3\}$,
  then $\{h_1,h_2,n_1,n_2,n_3\}$ is a theta with ends $h_1,h_2$,
  a contradiction. So no such $h_2$ exists. It follows that
  there exist $h_1',h_2',h_3' \in H_2$ and $n_1',n_2', n_3' \in \{n_1, \dots, n_5\}$
  such that $h_i'n_j'$ is an edge if and only if $i=j$. 
  By Lemma~\ref{coopH} there exists $K \subseteq H_2 \cup \{n_1',n_2',n_3'\}$
  such that
  $K$ is a subdivided claw or the line graph of a subdivided claw, and
  $\{n_1',n_2',n_3'\}$ is the set of simplicial vertices of $K$.
    But now $K \cup h_1$ is a theta or a pyramid in $G$, a contradiction.

  It follows that for every $h_1 \in H_1$, $|N(h_1) \cap N| \leq 4$.
  Since $|N|=17$ and $N \subseteq N(H_1)$, there exist
  $h_1, \dots, h_5 \in H_1$, and $n_1, \dots, n_5 \in N$
  such that $h_in_j$ is an edge if and only if $i=j$.

  By renumbering $n_1, \dots, n_5$ if necessary we may assume that
  one of the following holds:
  \begin{itemize}
  \item there exists $k \in H_2$ such that $n_1,n_2,n_3 \in N(k)$;
    in this case set $K_2=\{k,n_1,n_2,n_3\}$, or
  \item  there exist $k_1,k_2,k_3 \in H_2$ such that
    $k_in_j$ is an edge if and only if $i=j$. In this case
    let     $K_2 \subseteq H_2 \cup \{n_1,n_2,n_3\}$ be
  such that
  $K_2$ is a subdivided claw or the line graph of a subdivided claw
  and $\{n_1,n_2,n_3\}$ is the set of simplicial vertices of $K_2$
  (such $K_2$ exists by Lemma~\ref{coopH}).
  \end{itemize}
By Lemma~\ref{coopH} there exists $K_1 \subseteq H_1 \cup \{n_1,n_2,n_3\}$
  such that
  $K_1$ is a subdivided claw or the line graph of a subdivided claw, and
  $\{n_1,n_2,n_3\}$ is the set of simplicial vertices of $K_1$.
  But now $K_1 \cup K_2$ is a theta, a pyramid or a prism in $G$, a contradiction.
  \end{proof}
  
For $X \subseteq V(G)$, a component
$D$ of $G \setminus X$ is {\em full for $X$} if $N(D)=X$.
$X \subseteq V(G)$ is a {\em minimal separator} in $G$ if
there exist two distinct full components for $X$. We will now prove the following strengthening of Theorem~\ref{ablogn}: 

\begin{theorem}
  \label{H1H2logn}
  Let $G \in \mathcal{C}$ with $|V(G)| =n$, and let $H_1,H_2$
  be cooperative subgraphs of $G$, disjoint and anticomplete to each other.
    Then there is a set $X \subseteq V(G) \setminus (H_1 \cup H_2)$
    with $\alpha(X) \leq 16 \times 2 \log (n+1-|H_1|-|H_2|)$ and such that
    $X$ separates $H_1$ from $H_2$.
      \end{theorem}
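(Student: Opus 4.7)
The plan is to proceed by induction on $m := n + 1 - |H_1| - |H_2|$, aiming to establish $\alpha(X) \leq 32 \log m$. The base case $m = 1$ forces $V(G) = H_1 \cup H_2$, and then $X = \emptyset$ separates $H_1$ from $H_2$, since the two are anticomplete.

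For the inductive step ($m \geq 2$), the target is to produce a preliminary separator piece $X_0 \subseteq V(G) \setminus (H_1 \cup H_2)$ with $\alpha(X_0) \leq 32$ such that every component $C$ of $G \setminus X_0$ meeting both $H_1$ and $H_2$ satisfies $|C \setminus (H_1 \cup H_2)| \leq (m-1)/2$. Applying the induction hypothesis to each such component, with $H_1, H_2$ still regarded as cooperative subgraphs as discussed below, will then yield separators of $\alpha$ at most $32\log((m+1)/2)$, whose union with $X_0$ achieves the desired bound $32\log m$. Checking cooperativity of $H_1, H_2$ in $G[C]$ is a minor issue: both remain connected (since $X_0$ avoids $H_1 \cup H_2$ and each $H_i$ is connected by cooperativity in $G$), and the cooperativity condition can be recovered by re-absorbing into the interior those boundary vertices of $H_i$ that lose all exterior neighbors in $G[C]$.

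To construct $X_0$, the plan is to grow $H_1$ and $H_2$ into maximal cooperative subgraphs $A \supseteq H_1$ and $B \supseteq H_2$ that remain disjoint and anticomplete to each other, and to set $X_0 := N(A) \cap N(B)$; Lemma~\ref{commonnbrs} then gives $\alpha(X_0) \leq 16$ immediately. For the balance condition, $A \cup B$ should cover at least half of the middle set $M := V(G) \setminus (H_1 \cup H_2)$; when a single maximal pair of extensions is insufficient, one would invoke Theorem~\ref{balancedsep} with a weight concentrated on $M$ to locate a small hub around which the extensions can be aligned, and a secondary application of Lemma~\ref{commonnbrs} inside any residual unresolved region contributes at most another $16$ to $\alpha(X_0)$, for a total of $32$.

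The main obstacle will be the structural argument that such balanced cooperative extensions exist, i.e., that the ``blockers''---vertices of $M$ adjacent to $A$ (say) whose addition to $A$ destroys cooperativity, but which are not themselves in $N(B)$---form a set of small stability number. This is where 3PC-freeness enters via Lemma~\ref{coopH}: a large stable set of blockers for $A$, combined with the subdivided-claw skeletons afforded by Lemma~\ref{coopH}, should produce a theta, pyramid, or prism in $G$ (possibly invoking a symmetric configuration on the $B$ side), yielding a contradiction. Making this precise is the delicate ``relatively standard structural analysis'' alluded to in the paper's outline, and is where the bulk of the proof's work lies.
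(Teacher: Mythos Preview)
Your plan diverges substantially from the paper's argument, and the central step---producing the balanced preliminary piece $X_0$---is not actually carried out. You propose to grow $H_1,H_2$ to \emph{maximal} cooperative, anticomplete extensions $A,B$ and then argue that either $A\cup B$ already covers half of $M$, or some ``blocker'' analysis together with Theorem~\ref{balancedsep} and a second use of Lemma~\ref{commonnbrs} will finish. But none of these steps is substantiated: there is no reason maximal cooperative extensions should be large; Theorem~\ref{balancedsep} only yields a separator of bounded \emph{domination} number (namely $N[Y]$ for $|Y|\le d$), which says nothing about $\alpha$ and does not interface with Lemma~\ref{commonnbrs} in any evident way; and the ``blockers have small stability number'' claim is asserted rather than proved. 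You yourself flag this last point as ``where the bulk of the proof's work lies,'' which is to say the proof is missing. A secondary issue is that your induction on $G[C]$ requires $H_1,H_2$ to remain cooperative there; the ``re-absorbing'' fix you sketch need not preserve connectedness of $H\setminus\delta(H)$.

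The paper avoids all of this with a different idea. It removes $N_1=N(H_1)\cap N(H_2)$ (small $\alpha$ by Lemma~\ref{commonnbrs}), replaces $H_2$ by its closed neighbourhood $H_2^2$ in the remainder, removes $N_2=N(H_1)\cap N(H_2^2)$ (again small $\alpha$, since $H_2^2$ is still cooperative), and observes that now $\dist(H_1,H_2)>3$. This distance lets one take a minimal separator $Z$ at distance $\ge 2$ from both sides, with full components $F_1\ni H_1$ and $F_2\ni H_2$. The point is that $H_2':=F_2\cup Z$ is cooperative \emph{automatically}: its boundary is exactly $Z$, and $F_2$ is connected with $N(F_2)=Z$ because $F_2$ is full. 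One then recurses on $F_1\cup Z\cup F_2$ with $H_1$ and $H_2'$, the smaller side giving the halving. No maximal-extension or blocker argument is needed; cooperativity of the enlarged side comes for free from the minimal-separator structure. This is the missing idea in your proposal.
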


\begin{proof}
  Write $G_1=G$, $H_2^1=H_2$  and  $N_1=N_{G_1}(H_1)\cap N_{G_1}(H_2)$.
  Define $G_{2}=G_1 \setminus N_1$, 
  $H_2^{2}=N_{G_{2}}[H_2^1]$ and   $N_{2}=N_{G_{2}}(H_1) \cap N_{G_{2}}(H_2^{2})$.
  Let $G_3= G_2  \setminus N_2$.
    \\
  \\
  \sta{$\dist_{G_3}(H_1,H_2)>3$. \label{distance}}

  Let $P=p_1 \dd \dots \dd p_k$ be a shortest path in $G_3$
  from $H_1$ to $H_2$. Then $P$ is a path in $G$, $p_1 \in H_1$,
  $p_k \in H_2$, $P \setminus p_1$ is anticomplete to $H_1$,
  and $P \setminus p_k$ is anticomplete to $H_2$.
  Since $H_1$ is anticomplete to $H_2$, it follows that
  $k \geq 3$.    If $k=3$, then $p_2 \in N_1$, a contradiction.
  Suppose  $k=4$. Then $p_2,p_3 \not \in N_1$. It follows that
  $p_3 \in N_{G_2}(H_2)=H_2^2$, and therefore $p_2 \in N_2$;
  again a contradiction. This proves that $k>4$, and \eqref{distance}
  follows.
  \\
  \\
    It follows immediately from the definition of a cooperative subgraph that:
  \\
  \\
\sta{For $i \in \{1,2\}$, $H_1$ and $H_2^i$ are both  cooperative in $G_i$. \label{icoop}}

Now Lemma~\ref{coopH} implies:
\\
\\
\sta{$\alpha(N_i) <  17$ for every $i \in \{1,2\}$. \label{17}}

If $H_1$ and $H_2$ belong to different components
of $G_3$, then  $N_1 \cup N_2$ 
separates $H_1$ from $H_2$ in $G$. Since
by \eqref{17}, $\alpha(N_1 \cup N_2) \leq 16 \times 2$, the theorem holds.
Thus we may assume that there is a component $F$ of $G_3$
such that $H_1 \cup H_2 \subseteq F$.
\\
\\
\sta{There is  a minimal separator $Z$ in $F$ such that
$\dist_F(Z, H_i) \geq 2$ for $i \in \{1,2\}$,
and there exist distinct  full components $F_1,F_2$ for $Z$ such that
$H_i \subseteq F_i$. \label{distantsep}}

Let $X=N_{F}^2(H_1)$.
It follows that
$X$ separates $H_1$ from $H_2$ in $F$, and since $\dist_F(H_1,H_2) \geq 4$,  we have
$\dist_F(H_2,X) \geq 2$. For $i \in \{1,2\}$, let
$D_i$ be the component of $F \setminus X$ such that $H_i \subseteq D_i$.
Let $D_2'$ be the component of $F \setminus X$
such that $D_2 \subseteq D_2'$, and let $Z=N(D_2)$. Then
$Z \subseteq X$. Let $D_1'$ be the full component of $Z$ such that
$D_1 \subseteq Z$. Now $D_1',D_2'$ are full components for $Z$.
Setting $F_1=D_1'$ and $F_2=D_2'$, \eqref{distantsep} follows.
\\
\\
Let $Z, F_1, F_2$ be as in \eqref{distantsep}.
We are now ready to complete the proof of the theorem.
 The proof is by induction on $n-|H_1|-|H_2|$.
  If $n-|H_1|-|H_2|=0$, then $X=\emptyset$ works. Likewise, if $Z = \emptyset$, we set $X = \emptyset$. 
Since $F_1 \cap F_2 = \emptyset$, we may assume that
$|F_1 \setminus H_1| < \frac{n-|H_1|-|H_2|}{2}$.
Let $F'=F_1 \cup F_2 \cup Z$.
Let $H_2'=F_2 \cup Z$.
Then $\delta_{F'}(H_2')=Z$ and $H_2'$ is cooperative in $F'$.
Since $\dist_{F'}(H_1,Z)\geq 2$, we have that $H_1$ is anticomplete to
$H_2'$. Also
$$|F'| - |H_1|-|H_2'| \leq  |F_1|-|H_1|\le \frac{n-|H_1|-|H_2|-1}{2}.$$ 
Inductively, there exists
$X' \subseteq F' \setminus (H_1 \cup H_2')$
with
$$\alpha(X') \leq 16 \times 2 \log \left(\frac{n-|H_1|-|H_2|-1}{2}+1\right) \leq
16 \times 2 \log \left(n-|H_1|-|H_2|+1\right)-16 \times 2$$
such that $X'$ separates $H_1$ from $H_2' = F_2 \cup Z$ in $F'$. 
Let $X=X' \cup N_1 \cup N_2$.
Then $X$ separates $H_1$ from $H_2$ in $G$.
By \eqref{17}, $\alpha(X) \leq 16 \times 2 \log (n-|H_1|-|H_2|+1)$
as required.
\end{proof}

\section{Large stable subsets in neighborhoods} \label{sec:TheLemma}

In this section, we prove a statement about theta-free graphs which we expect
to use in future papers.

\begin{lemma}\label{TheLemma}
  Let $G$ be a theta-free graph.
Let $c \geq 2$  be integer, and let $Y$ be a set with
$\alpha(Y)>24c^2$.
Let $Z$ be the set of all vertices such that
$\alpha(N(z) \cap Y) \geq \frac{\alpha(Y)}{c}$.
Then $\alpha(Z) < 2c$.
\end{lemma}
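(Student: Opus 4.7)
The plan is to argue by contradiction. Suppose $\alpha(Z) \geq 2c$ and fix a stable set $S = \{z_1, \ldots, z_{2c}\} \subseteq Z$. For each $z_i$, choose a stable set $A_i \subseteq N(z_i) \cap Y$ of size at least $n/c$, where $n = \alpha(Y) > 24c^2$. The essential use of theta-freeness is that for any two non-adjacent vertices $u, v \in V(G)$ we have $\alpha(N(u) \cap N(v)) \leq 2$: otherwise three pairwise non-adjacent common neighbors together with $u, v$ would form an induced $K_{2,3}$, which is a theta. Applying this to non-adjacent pairs $z_i, z_j \in S$ yields $|A_i \cap A_j| \leq 2$, since $A_i \cap A_j$ is a stable set contained in $N(z_i) \cap N(z_j)$. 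Similarly, for three pairwise non-adjacent vertices, the $K_{3,2}$-theta observation gives $|A_i \cap A_j \cap A_k| \leq 1$.

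I would then fix a maximum stable set $T \subseteq Y$ with $|T| = n$ and exploit the following augmentation: for each $i$, the set $(T \setminus N(A_i)) \cup A_i$ is stable in $Y$, because both parts are stable and there are no edges between them by construction. Since this set has size at most $n = \alpha(Y)$, we get $|T| - |T \cap N(A_i)| + |A_i| \leq n$, so $|T \cap N(A_i)| \geq |A_i| \geq n/c$. Summing over $i$ produces $\sum_{i=1}^{2c} |T \cap N(A_i)| \geq 2n = 2|T|$, which means that on average each vertex of $T$ lies in at least two of the sets $T \cap N(A_i)$. A refined pigeonhole then pinpoints a $t \in T$ sitting in many such sets, and hence having neighbors in many different $A_i$'s simultaneously.

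Finally, I would convert this overlap into a contradiction with $\alpha(Y) = n$. For $t \in T$ non-adjacent to $z_i$, the theta bound $\alpha(N(t) \cap N(z_i)) \leq 2$ applied to the stable set $A_i \cap N(t)$ gives $|A_i \cap N(t)| \leq 2$, which tightly controls the edges from $t$ to each $A_i$. Combining the inclusion--exclusion lower bound $|\bigcup_i A_i| \geq 2n - 2\binom{2c}{2}$, the local bounds on $|A_i \cap N(t)|$, and the triple-wise bound $|A_i \cap A_j \cap A_k| \leq 1$, one should be able to modify $T$ into a strictly larger stable set in $Y$ (essentially by swapping well-chosen pieces of the $A_i$'s into $T$ while controlling the few vertices of $T$ that must be expelled). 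The main obstacle, and the step I expect to be technically most involved, is precisely this final accounting: the constant $24c^2$ strongly suggests a careful interleaving of the swap inequality $|T \cap N(A_i)| \geq |A_i|$ with the pairwise and triple theta bounds, so that the shortfalls from intersections enter linearly in $\binom{2c}{2}$ and are dominated by the hypothesis $n > 24c^2$.
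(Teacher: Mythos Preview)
Your setup matches the paper's: assume a stable set $\{z_1,\dots,z_{2c}\}\subseteq Z$, pick stable sets $A_i\subseteq N(z_i)\cap Y$ of size $\geq \alpha(Y)/c$, and use theta-freeness to get $|A_i\cap A_j|\le 2$. From there, however, the argument does not close. The swap inequality $|T\cap N[A_i]|\ge |A_i|$ and the bound $|A_i\cap N(t)|\le 2$ control how $T$ interacts with a \emph{single} $A_i$, but to produce a stable set larger than $\alpha(Y)$ you must assemble pieces of several $A_i$'s simultaneously, and nothing in your proposal bounds the edges \emph{between} $A_i$ and $A_j$. A priori $A_i$ could be almost complete to $A_j$, in which case no combination of your swap and pigeonhole steps yields a stable set exceeding $\alpha(Y)$; the vague final ``accounting'' cannot be carried out with the listed ingredients.

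This cross-edge control is precisely the step the paper supplies and you are missing. The paper proves that for distinct $z,z'$ in the stable set, $|A_z\cap N(A_{z'}\setminus N(z))|\le 4$: if five vertices of $A_z$ had neighbors in $A_{z'}$, then either some vertex of $A_{z'}$ has three such neighbors (theta with ends $z$ and that vertex), or one finds a matching of size three between $A_z$ and $A_{z'}$, giving a theta with ends $z,z'$ via three internally disjoint length-three paths. With this in hand the paper bypasses $T$ entirely: it trims each $A_z$ by deleting $N[z']$ and $N(A_{z'}\setminus N(z))$ for all other $z'$, losing at most $6\cdot 2c$ vertices, and the trimmed sets are pairwise disjoint and anticomplete. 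Their union is then a stable subset of $Y$ of size at least $2c(\alpha(Y)/c-12c)$, forcing $\alpha(Y)\le 24c^2$. Your route through a maximum stable set $T$ and averaging is not needed once this edge bound is available.
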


\begin{proof}

  Suppose not, and let  $I \subseteq Z$ be a stable set of size $2c$.
  For every $z \in I$, let $J'(z)$ be a stable set in $N(z) \cap Y$ with
$|J'(z)|=\left\lceil \frac{\alpha(Y)}{c} \right\rceil$. 
\\
\\
\sta{For all distinct  $z,z' \in I$,   $\alpha (N[z] \cap N[z']) \leq 2$.
  \label{disjoint}}

Suppose that $\alpha (N[z] \cap N[z']) \geq 3$.
Since $z$ is non-adjacent to $z'$, there exists a stable set of size three
in $N(z) \cap N(z')$, and we get a theta with ends $z,z'$, a contradiction.
This proves~\eqref{disjoint}.
\\
\\
\sta{For all distinct $z,z' \in I$, $|J'(z) \cap N(J'(z')  \setminus N(z))|  \leq 4 $.
  \label{anticomplete}}

Suppose not, and let $\{n_1,..,n_5\} \subseteq J'(z) \cap N(J'(z') \setminus N(z))$.
Then $\{n_1, \dots, n_5\}$ is   a stable set.
If some $h \in  J'(z') \setminus N(z)$ has three neighbors in $\{n_1,..,n_5\}$, then
we get a theta with
ends $z,h$; so no such $h$ exists. It follows that there exist
$h_1,h_2,h_3 \in  J'(z') \setminus N(z)$ such that (permuting $n_1, \dots, n_5$ if necessary)
$n_ih_j$  is an edge if and only if  $i=j$. But now
$\{z,n_1,n_2,n_3,h_1,h_2,h_3,z'\}$ is a theta with ends $z,z'$, again a contradiction.
This proves~\eqref{anticomplete}.
\\
\\
 Let
$J(z)=J'(z)\setminus \bigcup_{z' \in I \setminus  \{z\}} (N[z'] \cup (N(J'(z') \setminus N(z))))$.
By \eqref{disjoint} and \eqref{anticomplete} it follows that 
$$|J(z)| \geq |J'(z) |-6|I| \geq \frac{\alpha(Y)}{c}-12c.$$
But for all distinct $z,z' \in I$ the sets $J(z),J(z')$ are disjoint and anticomplete to
each other; it follows that  $\bigcup_{z \in I} J(z)$ is a stable set of size
  $2c\left(\frac{\alpha(Y)}{c}- 12 c\right)$.
    Consequently,
    $$ 2c\left(\frac{\alpha(Y)}{c}- 12 c\right)        \leq \alpha(Y)$$
and so 
$\alpha(Y) \leq 24c^2$,
a contradiction.
\end{proof}

   \section{From domination to stability}
\label{sec:smallsep}
The last  step in the proof of Theorem~\ref{treealpha} is to transform
balanced separators with small domination number into balanced separators
with small stability number.

The results in this section are more general than what we need in this paper; again they are to be used in future papers in the series.
Let $L,d,r$ be integers. We say that an $n$-vertex graph $G$ is {\em $(L,d,r)$-breakable}
  if
  \begin{enumerate}
    \item
      for every two disjoint and anticomplete cliques 
      $H_1,H_2$ of $G$ with $|H_1| \leq r$ and $|H_2| \leq r$,  there is a set
  $X \subseteq G \setminus (H_1 \cup H_2)$ with $\alpha(X) \leq L$ separating $H_1$ from $H_2$, and
  \item  for every normal weight function $w$ on $G$ and for every induced subgraph
    $G'$ of $G$ there  exists a set $Y \subseteq V(G')$ with $|Y| \leq d$
    such that for every component $D$ of $G'  \setminus N[Y]$,
    $w(D) \leq \frac{1}{2}$.
\end{enumerate}

We prove:
  \begin{theorem}
  \label{smallsep}
  Let $d>0$ be  an integer and let  $C(d)=100 d^2$. 
  Let $L,d,n, r >0$ be integers such that $r \leq d(2+\log n)$ and let $G$ be an
  $n$-vertex $(L,d,r)$-breakable
  theta-free graph.
  Then there exists a $w$-balanced separator $Y$ in $G$ such that
  $\alpha(Y) \leq C(d)  \left\lceil \frac{d (2+\log n)} {r} \right\rceil (2+\log n) L$.
\end{theorem}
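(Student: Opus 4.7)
The plan is to construct the separator $Y$ iteratively, through an outer loop that halves the weight of the heaviest remaining component and an inner step that combines the two conditions of $(L,d,r)$-breakability with Lemma~\ref{TheLemma}. The two $(2+\log n)$ factors in the target bound correspond naturally to the outer loop depth and a counting argument inside the inner step.

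For the outer loop I would maintain a growing set $Y$, initially empty, and repeat the following while some component $D$ of $G\setminus Y$ satisfies $w(D)>\tfrac12$: apply Condition~(2) of $(L,d,r)$-breakability to $G[D]$ (with $w|_D$ renormalized) to obtain $Y^{\mathrm{dom}}_D\subseteq D$ with $|Y^{\mathrm{dom}}_D|\le d$ such that $N_{G[D]}[Y^{\mathrm{dom}}_D]$ is a balanced separator of $G[D]$. A standard halving argument bounds the outer loop at $O(\log n)$ iterations, contributing the outer $(2+\log n)$ factor. The inner step then replaces the dominated separator $N_{G[D]}[Y^{\mathrm{dom}}_D]$ by a set of small stability number. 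For each $y\in Y^{\mathrm{dom}}_D$, the singleton $\{y\}$ is a clique of size at most $r$, so Condition~(1) lets us separate $\{y\}$ from any disjoint anticomplete clique of size at most $r$ using a set with stability number at most $L$. I would choose anticomplete cliques $K_1,\ldots,K_t$ of size $\le r$, located in the heavy pieces we must separate from $y$, and for each pair $(\{y\},K_i)$ invoke Condition~(1) to produce separators $X_{y,i}$ with $\alpha(X_{y,i})\le L$; we then add $Y^{\mathrm{dom}}_D\cup\bigcup_{y,i}X_{y,i}$ to $Y$.

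The quantitative heart of the argument is controlling $t$, the number of cliques needed per $y$, by $\lceil d(2+\log n)/r\rceil$; this is where theta-freeness enters through Lemma~\ref{TheLemma}. If $\alpha(N(y)\cap D)$ is so large that we would be forced to use more than $\lceil d(2+\log n)/r\rceil$ chunks, then applying Lemma~\ref{TheLemma} to a maximum stable set in the relevant neighborhood with parameter $c\asymp d$ shows that only $O(d)$ vertices can each dominate a stability-fraction $1/c$ of it, so the chunks can be covered by cliques of size $\le r$ up to the stated multiplicity. Each inner step then contributes $d\cdot\lceil d(2+\log n)/r\rceil\cdot L$ to $\alpha(Y)$, and multiplying by the $O(\log n)$ outer iterations, together with a factor absorbing $|Y^{\mathrm{dom}}_D|\le d$ and the bounded number $\le 4$ of heavy components, gives the stated $C(d)\lceil d(2+\log n)/r\rceil(2+\log n)L$ with $C(d)=100d^2$.

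The main obstacle will be verifying the inner step rigorously. Condition~(1) only guarantees a separation between $\{y\}$ and the single clique $K_i$ in $G$, not that the union of the separators together with $Y^{\mathrm{dom}}_D$ actually yields a balanced separator of $G[D]$. One must carefully track which components of $G[D]\setminus N_{G[D]}[Y^{\mathrm{dom}}_D]$ might be merged once we remove only part of $N(Y^{\mathrm{dom}}_D)\setminus Y^{\mathrm{dom}}_D$, and argue that the chosen cliques $K_i$ (together with the stability control coming from Lemma~\ref{TheLemma}) separate every pair of heavy components. Executing this ``no-merging'' argument, and in particular identifying the correct cliques $K_i$ witnessing that each heavy component can be isolated via Condition~(1), is where the bulk of the technical work lies; the theta-free hypothesis is used twice here, once in Lemma~\ref{TheLemma} to bound the number of high-dominators, and once in the structural verification that the clique-by-clique separation actually disconnects the heavy parts.
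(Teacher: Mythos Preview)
Your proposal has a genuine gap, and you have essentially identified it yourself: the inner step does not work as stated. Condition~(1) separates $\{y\}$ from a single small clique $K_i$, but there is no mechanism by which a union of such separators, together with $Y^{\mathrm{dom}}_D$, becomes a $w$-balanced separator of $G[D]$. The heavy components of $G[D]\setminus N[Y^{\mathrm{dom}}_D]$ can merge through $N(Y^{\mathrm{dom}}_D)\setminus Y^{\mathrm{dom}}_D$, and nothing in your plan isolates them. Your proposed use of Lemma~\ref{TheLemma} --- to bound the number of cliques needed to ``cover'' a neighbourhood --- is not what that lemma provides; it bounds $\alpha$ of the set of vertices whose neighbourhood meets a fixed set $Y$ in large stability, and says nothing about covering $N(y)$ by cliques of size $\le r$. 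Consequently the outer ``halving'' loop never gets off the ground, because a single inner step does not produce a balanced separator.

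The paper's argument is organised quite differently. First (Theorem~\ref{balancedsep_breakable}), by separating every non-adjacent pair in the $d$-element dominating set via condition~(1), one obtains a balanced separator of the form $N[K]\cup Y(K)$ where $K$ is a \emph{single clique} of size $\le d$ and $\alpha(Y(K))\le d^2L$. The iteration then produces a sequence of such cliques $K_1,K_2,\dots$ and maintains a growing clique $L_i$ (with $|L_i|\le di$) together with a set $Bad_i$ of vertices having a neighbour in every $K_j$, $j\le i$. Lemma~\ref{TheLemma} is applied to $Y=Bad_{i-1}$ with $c=2d$: it strips out the $\le 4d$ (in stability) vertices $Z$ whose neighbourhoods meet $Bad_{i-1}$ in large stability, so the next clique $K_i$, found inside $G_{i-1}\setminus Z$, satisfies $\alpha(N(v)\cap Bad_{i-1})\le \alpha(Bad_{i-1})/(2d)$ for each $v\in K_i$; hence $\alpha(Bad_i)\le\alpha(Bad_{i-1})/2$. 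This geometric decay, not any weight-halving, bounds the number of iterations by $2+\log n$. The factor $\lceil d(2+\log n)/r\rceil$ arises not from chunking neighbourhoods but from separating each new $v\in K_i$ from its non-neighbours in the accumulated clique $L_{i-1}$, which has size $\le d(i-1)$ and is partitioned into $\lceil d(i-1)/r\rceil$ pieces of size $\le r$ for condition~(1). The final separator is the accumulated $Z_k$, and the proof that it is balanced uses that $Bad_k=\emptyset$ together with the invariant~\eqref{Liprop} on the cliques $L_i\cap K_j$.
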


We start by proving a variant of Theorem~\ref{balancedsep} for $(L,d,1)$-breakable graphs. 

\begin{theorem} \label{balancedsep_breakable}
Let $L,d$ be integers, and let $G$ be an $(L,d,1)$-breakable graph.
Let $w$ be a normal weight function on $G$.
Then there exist a clique $K$ in $G$  and
a set $Y(K) \subseteq V(G) \setminus K$ such that
\begin{itemize}
\item $|K| \leq d$,
\item $\alpha(Y(K)) \leq d^2L$
  and
\item $N[K] \cup Y(K)$ is a $w$-balanced separator in $G$.
\end{itemize} 
\end{theorem}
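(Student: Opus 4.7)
The plan is to combine the two guarantees of $(L,d,1)$-breakability in a direct way. First, apply the balanced-domination property to $G$ with the given weight $w$: this yields a set $Y_0 \subseteq V(G)$ with $|Y_0| \leq d$ such that every component of $G \setminus N[Y_0]$ has $w$-weight at most $1/2$. Second, for each pair of non-adjacent vertices $y,y' \in Y_0$, apply the pair-separation property to obtain a set $X_{y,y'} \subseteq V(G) \setminus \{y,y'\}$ with $\alpha(X_{y,y'}) \leq L$ separating $y$ from $y'$ in $G$. Collect these into $Z := \bigcup_{yy' \notin E(G)} X_{y,y'}$, so that $\alpha(Z) \leq \binom{d}{2} L \leq d^2 L$.

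The key structural observation is that in $G \setminus Z$, every connected component intersects $Y_0$ in a clique: two non-adjacent vertices of $Y_0$ lying in the same component of $G \setminus Z$ would contradict that $X_{y,y'} \subseteq Z$ separates them. Hence if $C_1,\ldots,C_m$ are the components of $G \setminus Z$ and $K_i := Y_0 \cap C_i$, then each $K_i$ is a clique of size at most $d$. I then split into two cases on weights: if every $w(C_i) \leq 1/2$, then $Z$ is already a $w$-balanced separator and I take $K = \emptyset$, $Y(K) = Z$; otherwise there is a unique component $C^*$ with $w(C^*) > 1/2$, and I take $K := K^* = Y_0 \cap C^*$ with $Y(K) := Z$.

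To verify that $N[K^*] \cup Z$ is $w$-balanced in Case (B), I examine the components of $G \setminus (N[K^*] \cup Z)$ in two regions. For each $C_j$ with $j \neq i^*$, since $K^* \subseteq C^*$ and $C^*, C_j$ are distinct components of $G \setminus Z$, no vertex of $C_j$ is adjacent to a vertex of $C^*$; hence $C_j$ is untouched by $N[K^*]$, and $w(C_j) \leq 1 - w(C^*) < 1/2$. Inside $C^*$, removing $N[K^*]$ should leave precisely the components of $G \setminus N[Y_0]$ restricted to $C^*$, each of weight at most $1/2$ by Step 1.

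The main obstacle I anticipate is precisely this last assertion: establishing the identity $N[K^*] \cap C^* = N[Y_0] \cap C^*$ requires that $Y_0 \setminus K^*$ is anticomplete to $C^*$. Vertices of $Y_0$ in other components $C_j$ of $G \setminus Z$ are anticomplete to $C^*$ automatically (as argued above), but the subtle case is vertices of $Y_0$ that happen to lie in $Z$, which could in principle have neighbors in $C^*$ and thereby allow components of $C^* \setminus N[K^*]$ to be strictly larger than components of $G \setminus N[Y_0]$. Handling this delicate case — either by a careful choice of the pair separators $X_{y,y'}$ so that $Z \cap (Y_0 \setminus \{y,y'\}) = \emptyset$ (for instance, by composing $X_{y,y'}$ out of pair separators for pairs involving any intruding $Y_0$-vertex, at the cost of only a constant factor in stability, which still fits within $d^2 L$), or by an auxiliary argument that rules out such intrusions — is the crux of the proof, after which the case analysis above finishes cleanly.
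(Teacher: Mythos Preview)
Your approach is essentially the paper's approach, and you correctly spot the one genuine obstacle. However, your proposed fix is more complicated than necessary and, as stated, is too vague to be convincing: arranging that each $X_{y,y'}$ avoids the other vertices of $Y_0$ by ``composing'' further pair-separators could cascade, and you have not shown that this terminates within the $d^2L$ budget.

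The paper resolves the difficulty with a one-line trick: simply throw $Y_0$ itself into the separator, setting $Z' := Y_0 \cup Z$. This costs at most $d$ extra in stability (so $\alpha(Z') \leq \binom{d}{2}L + d \leq d^2L$), and now $Y_0 \cap C^* = \emptyset$ for the heavy component $C^*$ of $G\setminus Z'$. Consequently you must also change the definition of $K$: take $K$ to be the set of vertices of $Y_0$ \emph{with a neighbor in} $C^*$ (your $Y_0 \cap C^*$ is now empty). This $K$ is still a clique, since two non-adjacent vertices of $Y_0$ are separated by $Z' \supseteq X_{y,y'}$ and hence cannot both attach to the same component of $G\setminus Z'$. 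With this definition, $N(Y_0)\cap C^* = N(K)\cap C^*$ on the nose, so every component of $C^* \setminus N[K]$ lies inside a component of $G\setminus N[Y_0]$ and has weight at most $1/2$. The ``subtle case'' you were worried about simply cannot arise.
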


\begin{proof}
 Since $G$ is $(L,d,1)$-breakable, there exists  
 $X \subseteq V(G)$ with $|X| \leq d$
    such that for every component $D$ of $G  \setminus N[X]$,
    $w(D) \leq \frac{1}{2}$. For every pair $x,x'$ of non-adjacent
    vertices of $X$, let $Y(x,x')$ be a set with $\alpha(Y(x,x')) \leq L$  and 
    $Y(x,x') \cap \{x,x'\}=\emptyset$
    separating $x$ from $x'$ in $G$ (such a set exists since $G$ is 
    $(L,d,1)$-breakable). Now let
    $$Y=X \cup \bigcup_{x,x' \in X \text{ non-adjacent }}Y(x,x').$$
    Then $\alpha(Y) \leq \binom{d}{2} L +d \leq d^2L$.
    If $Y$ is a $w$-balanced separator of $G$, set
    $K=\emptyset$ and $Y(K)=Y$, and the theorem holds.
    Thus we may assume that there is a component $D$ of
    $G \setminus Y$ with $w(D) > \frac{1}{2}$.
    Let $K \subseteq X$
    be the set of vertices of $X$ with a neighbor in $D$.
Then $D \cap N(X) = D \cap N(K)$.
    Since  every two non-adjacent vertices of $X$ are separated by
    $Y$, it follows that $K$ is a clique. 
    
    We claim that $N[K] \cup Y$ is a $w$-balanced separator in $G$.
    Suppose not, and let $D'$ be  the component of $G \setminus (N[K] \cup Y)$ with $w(D') > \frac{1}{2}$. Then $D' \subseteq D$. But $D\cap N(X)= D\cap N(K) \subseteq Y$,
    and consequently $D' \cap N[X]=\emptyset$, contrary to the fact that $N[X]$ is a
    $w$-balanced separator in $G$. Thus, setting  $Y(K)=Y$,
    the theorem holds. 
\end{proof}

We are now ready to prove Theorem~\ref{smallsep}. Let us briefly describe the idea of the proof. Throughout, we have $Z_{i-1}$,  which is part of the separator we are building, and also a clique $L_{i-1}$ and cliques $K_1, \dots, K_{i-1}$ such that $N(K_j \cap L_{i-1})$ is a balanced separator in $G \setminus Z_{i-1}$. 

To achieve that, at  each step, we do one of the following. If
$L_i \cap K_i$ is empty, and then $Z_i$ is the balanced separator we are looking for. Otherwise,  we add at least one vertex $v$ to create $L_i$ from $L_{i-1}$
which means that $v$ has a neighbor in each previous $K_j$ (since $L_i$ remains a clique). 

Now our goal becomes controlling vertices with a neighbor in each $K_j$; we call them $Bad_{i-1}$ and we may assume they have big stability number. This tells us via Lemma \ref{TheLemma} that we can remove a set of small stability number, and for all remaining vertices, their neighbors in $Bad_{i-1}$ have stability number only a small fraction of $\alpha(Bad_{i-1})$. 

Theorem \ref{balancedsep_breakable} gives us a balanced separator, but it consists of a clique ($K_i$), its neighbors, and some other vertices ($Y(K_i)$). We add $K_i$ and $Y(K_i)$ to our separator; both have bounded stability number. Since $K_i$ is small, with $Bad_i = N(K_i) \cap Bad_{i-1}$, we have $\alpha(Bad_i) \leq \alpha(Bad_{i-1})/2$, which means that after logarithmically many steps, $Bad_i$ will be empty and the process terminates. 

It remains to build $L_i$. To do so, we find a small set of separators for each vertex $v \in K_i$ and its non-neighbors in $L_{i-1}$ and add it to $Z_i$. Now, for the big component $D'$ of $G \setminus Z_i$, the parts of $K_i$ and $L_{i-1}$ that attach to $D'$ are complete to each other, and it turns out that we can restrict ourselves to those parts for $L_i$. 

\begin{proof}

  Let $G$ be an $(L,d,r)$-breakable graph on $n$ vertices.
  Let $C(d)=100d^2$. 
  We define several sequences of subgraphs of $G$ and subsets of $V(G)$.
  Let $G_0=G$; let $K_0=Y(K_0)=L_0=Z_0=\emptyset$ and let $Bad_0=V(G)$.
We iteratively define $G_i, K_i, L_i, Z_i, Bad_i$ with the following properties: 
\begin{enumerate}[(I)]
\item \label{K}  If $i > 0$, then $K_{i} \cap Z_{i-1} = \emptyset$.
\item \label{smalljunk} $\alpha(Z_{i}) \leq C(d)L \cdot \lceil \frac{di}{r} \rceil i/2$.

 \item \label{monotone} $G_{i} = G \setminus Z_{i}$. 
\item \label{Bad} If $v \in G_{i}$ has a neighbor in $K_j$ for every 
$j \in \{1, \dots, i\}$, then $v \in Bad_{i}$.
\item \label{smallbad} $\alpha(Bad_{i}) \leq \frac{n}{2^{i}}$. 
 \item \label{smallLi} 
    $L_{i}$ is a clique and $|L_{i}| \leq di$.
 
\item \label{Liprop}
       For all $1 \leq j \leq i$, $Z_i \cup N(K_j \cap L_i)$ is a $w$-balanced separator in $G$.

    \end{enumerate}
 We proceed as follows. Suppose that we have defined 
  $G_{i-1}, K_{i-1},L_{i-1},Z_{i-1}, Bad_{i-1}$ satisfying the properties above. 
 If $Z_{i-1}$ is a $w$-balanced separator of $G$, we stop the construction.
 Otherwise, we
    construct  $G_i, K_i,L_i,Z_i, Bad_i$
    and show that the properties above continue to hold.
    If $\alpha(Bad_{i-1}) > 96d^2$, 
let $Z$ be obtained by applying Lemma~\ref{TheLemma} to $G$
with $Y= Bad_{i-1}$ and $c=2d$; then $\alpha(Z) \leq 4d$. 
If $\alpha(Bad_{i-1}) \leq 96d^2$, let $Z=Bad_{i-1}$.

In both cases, $\alpha(Z) \leq 96 d^2$.
Let $G'=G_{i-1} \setminus Z$.
    Let 
    $$w'(v)=\frac{w(v)} { \Sigma_{v \in V(G')}w(v)}.$$
    Then $w'$ is a normal weight function on $G'$, and
    for every $v \in G'$, $w'(v) \geq w(v)$.
        Let $K_i,Y(K_i)$ be as in Theorem~\ref{balancedsep_breakable} applied to
$G'$ and $w'$. It follows that $K_i$ is a clique of size at most $d$, and $K_i \subseteq G' \subseteq G \setminus Z_{i-1}$, so \eqref{K} holds (for $i$).

Let $v \in K_i$. By \eqref{smallLi} (for $i-1$),   $L_{i-1} \setminus N(v)$ can be partitioned into at most $\left\lceil \frac{d(i-1)}{r}\right\rceil$ cliques each of size at most $r$. 
Since $G$ is $(L,d,r)$-breakable, this implies that there
exists  a set $Z(v)$ with $\alpha(Z(v)) \leq \left\lceil \frac{d(i-1)}{r} \right\rceil L$, such that
$Z(v)$ separates $\{v\}$ from $L_{i-1} \setminus N(v)$ in $G$.
Let $Z'=\bigcup_{v \in K_i}Z(v)$; then
$\alpha(Z') \leq d\left\lceil \frac{d(i-1)}{r} \right\rceil L$.

Let $Z_i=Z_{i-1} \cup Z' \cup Z \cup K_i \cup Y(K_i)$.
Next, we have:
$$\alpha(Z_i \setminus Z_{i-1}) \leq d \left\lceil \frac{d(i-1)}{r} \right\rceil L+96d^2 + 1 + d^2L \leq C(d)\left\lceil \frac{di}{r} \right\rceil L/2.$$
It follows that $\alpha(Z_i) \leq C(d) \left\lceil \frac{di}{r} \right\rceil L \times i/2$, and
\eqref{smalljunk} holds.

Let $G_i=G_{i-1} \setminus Z_i = G \setminus Z_i$; now \eqref{monotone} holds.
Let $Bad_i=Bad_{i-1} \cap N_{G'}(K_i) \cap V(G_i)$; now \eqref{Bad} holds.  
Since $G' = G_{i-1} \setminus Z$ and either $Z = Bad_{i-1}$ or $Z$ is the set of all vertices $v$ such that $\alpha(N(v) \cap Bad_{i-1}) \geq \alpha(Bad_{i-1})/2d$ and by \eqref{smallbad} for $i-1$, we have that for every $v \in K_i$,
$\alpha(N_{G'}(v) \cap Bad_{i-1}) \leq \frac{n}{2^id}$.
It follows that $\alpha(Bad_i) \leq \frac{n}{2^i}$, and
\eqref{smallbad} holds for $i$.

 If $Z_i$ is a $w$-balanced separator in $G_{i-1}$, let $L_i=L_{i-1}$;
 now \eqref{smallLi} and \eqref{Liprop} hold. Thus we may assume not, and let 
  $D'$ be the maximal  connected subset of $G_i = G \setminus Z_i$ with $w(D') > \frac{1}{2}$.

 We now define $L_i$ and check that
\eqref{smallLi} and \eqref{Liprop} hold.
Since $N[K_i] \cup Y(K_i)$ is a $w'$-balanced separator in $G'$ and since $G_i \subseteq G' \setminus (K_i \cup Y(K_i))$,
it follows that $D' \cap N(K_i) \neq \emptyset$, and so $N(D') \cap K_i \neq \emptyset$.
Since $Z' \cap G_i =\emptyset$, and since for every $v \in K_i$, $Z(v) \subseteq Z'$ separates $v$ from $L_{i-1} \setminus N(v)$, it follows  that $N(D')  \cap K_i$ is complete
to $N(D') \cap L_{i-1}$; let 
$$L_i=(N(D') \cap L_{i-1}) \cup (N(D') \cap K_i)\mbox{.}$$
Then $L_i$ is a clique, and no vertex of $K_i \setminus L_i$
has a neighbor in $D'$. Since $L_i \setminus L_{i-1} \subseteq K_i$,
\eqref{smallLi} holds.

In order to prove  \eqref{Liprop}, let us consider first the case $j = i$. Suppose for a contradiction that $D$ is a component of $G \setminus (Z_i \cup N(K_i \cap L_i))$ with $w(D) > 1/2$. Then $D \subseteq D'$. Moreover, $K_i \cap L_i = N(D') \cap K_i$, and so $N_{G_i}(K_i \cap L_i) \cap D' = N_{G_i}(K_i) \cap D'$. Therefore, $D \subseteq D'\setminus N_{G_i}(K_i) \subseteq G_i \setminus N(K_i)$. However, since $N[K_i] \cup Y(K_i)$ is a $w'$-balanced separator in $G'$, and since $K_i \cup Y(K_i) \subseteq Z_i$, it follows that $w(D) \leq w'(D) \leq 1/2$. 

Next, we consider the case when $j < i$. By \eqref{Liprop} for $i-1$, we know that $Z_i \cup N(K_j \cap L_{i-1})$ is a $w$-balanced separator in $G$. Suppose for a contradiction that $D$ is a component of $G \setminus (Z_i \cup N(K_j \cap L_i))$ with $w(D) > 1/2$. Then $D \subseteq D'$. We have that $N(D') \cap K_j \cap L_{i-1} \subseteq L_i \cap K_j$, and therefore, $N_{G_i}(L_{i-1} \cap K_j) \subseteq N_{G_i}(L_i \cap K_j)$. It follows that $D \subseteq D' \setminus N(L_{i-1} \cap K_j) \subseteq G \setminus (Z_i \cup N(L_{i-1} \cap K_j))$, a contradiction as $Z_i \cup N(L_{i-1} \cap K_j)$ is a $w$-balanced separator in $G$.

 We have shown that properties
\eqref{K}--\eqref{Liprop} are maintained at each step of the
construction.

We can now complete the proof of the theorem.
It follows immediately from \eqref{smallbad}
that there exists $k  \leq 1+ \log n$ such that $Bad_k=\emptyset$.
We claim that $Z_k$ is  a $w$-balanced separator in $G$ (and in particular the
construction stops).
Suppose not.  Then the construction continues and the sets 
$G_{k+1}, K_{k+1}, L_{k+1}, Z_{k+1}, Bad_{k+1}$ are defined.
Also, there exists a component $D$  of $G \setminus Z_{k}=G_{k}$
such that $w(D)> \frac{1}{2}$. We apply 
\eqref{Liprop}  with $i=k+1$. Since $Z_k$ is not a $w$-balanced separator,
it follows that $N(L_{k+1} \cap K_{k+1}) \neq \emptyset$, and consequently there is a vertex $v \in L_{k+1} \cap K_{k+1}$. By \eqref{K} and \eqref{monotone}, it follows that $v \in G_k$. Since $L_{k+1} \cap K_j \neq \emptyset$   for
every $1 \leq j \leq k$ (again by \eqref{Liprop}), we deduce  from  \eqref{Bad} and \eqref{smallLi}  that $v \in Bad_k$, a contradiction.

Now by  \eqref{smalljunk} we have
$$\alpha(Z_{k+1}) \leq C(d) \left\lceil \frac{d(k+1)}{r} \right\rceil L \times (k+1)/2 \leq
C(d)\frac{d(\log n+2)}{r} (2+\log n)L,$$
as required.

\end{proof}

\section{The proof of Theorem~\ref{treealpha}} \label{sec:theend}

We start with a lemma. (There are many versions of this lemma; we chose
one with a simple proof, without optimizing the constants.)
\begin{lemma}\label{lemma:bs-to-treealpha}
  Let $G$ be a graph, let $c \in [\frac{1}{2}, 1)$, and let $d$ be a positive integer. If for every normal weight function $w$ on $G$, there is a
    $(c,w)$-balanced separator $X_w$ with $\alpha(X_w) \leq d$, then 
        the tree independence number of $G$ is at most $\frac{3-c}{1-c}d$. 
\end{lemma}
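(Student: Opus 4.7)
The plan is to build the tree decomposition of $G$ by a recursive construction parametrized by a pair $(U, B)$ with $U \subseteq V(G)$, $N_G(U) \subseteq B \subseteq V(G) \setminus U$, and maintaining the invariant $\alpha(B) \leq \frac{2d}{1-c}$. The procedure will return a tree decomposition of $G[U \cup B]$ whose root bag contains $B$ and all of whose bags have independence number at most $\frac{3-c}{1-c}d$; starting from $(V(G), \emptyset)$ then yields the required decomposition of $G$, and the root-bag bound $\alpha(B) + \alpha(X) \leq \frac{2d}{1-c} + d = \frac{3-c}{1-c}d$ will drop out immediately.

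The first step is to extend the hypothesis: for any subset $H \subseteq V(G)$ and any normal weight function $w_H$ on $G[H]$, extending $w_H$ by zero outside $H$ produces a normal weight function on $G$, whose $(c,w)$-balanced separator $X$, intersected with $H$, is a $(c,w_H)$-balanced separator of $G[H]$ of independence number at most $d$. Equipped with this, at each recursive step I will apply the extended hypothesis to $G[U\cup B]$ with a carefully chosen weight function, take the resulting separator $X$, declare the current bag to be $B \cup X$, and recurse on $(D, N_G(D))$ for each connected component $D$ of $G[U] \setminus X$, noting $N_G(D) \subseteq B \cup X$.

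The weight function $w$ will be chosen from two regimes depending on $\alpha(B)$. When $\alpha(B) \leq \frac{d}{1-c}$, I take $w$ uniform on $U$; since $w(U)=1>c$, any balanced separator must intersect $U$ (so $|D|<|U|$), and the new boundary satisfies $\alpha(N_G(D)) \leq \alpha(B) + \alpha(X) \leq \frac{d}{1-c} + d = \frac{(2-c)d}{1-c} \leq \frac{2d}{1-c}$. When $\alpha(B) > \frac{d}{1-c}$, I take $w$ uniform on a maximum stable set $M$ of $B$, so $|M|=\alpha(B)$. The key claim is then a stable-set exchange: for any component $C$ of $G[U\cup B] \setminus X$ and any stable set $T \subseteq C \cap B$, the union $T \cup (M \setminus (C \cup X))$ is a stable set in $B$, because $X$ separates distinct components of $G[U \cup B] \setminus X$ and $M \cap X$ is a stable subset of $X$ of size at most $d$; this yields $|T| \leq |M \cap C| + |M \cap X| \leq c|M| + d$. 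Observing that every vertex of $N_G(D)$ lying outside $X$ must lie in the component $C$ of $G[U\cup B]\setminus X$ that contains $D$, and adding back $X \cap N_G(D)$ of independence number at most $d$, I obtain $\alpha(N_G(D)) \leq c\alpha(B) + 2d \leq c \cdot \frac{2d}{1-c} + 2d = \frac{2d}{1-c}$, preserving the invariant. The case hypothesis $\alpha(B)>\frac{d}{1-c}$ also rules out $X \cap U = \emptyset$ here, since otherwise the component containing all of $U$ would give $(1-c)|M| \leq |M \cap X| \leq d$, a contradiction; hence $|D|<|U|$ and the recursion makes progress.

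The main obstacle will be this stable-set exchange argument and the algebra that makes the two regimes dovetail to simultaneously preserve the independence-number invariant on $B$ and strict descent of $|U|$. A secondary technicality is the routine verification that concatenating the subtree decompositions produced by the recursive calls, attached to a shared root bag $B \cup X$, yields a valid tree decomposition of $G[U \cup B]$; this is standard since $X$ separates the components $D$ and each recursive call puts $N_G(D)$ into its root bag.
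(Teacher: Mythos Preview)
Your proposal is correct and follows essentially the same route as the paper: both carry a boundary set of independence number at most $\frac{2d}{1-c}$ through the recursion, put weight uniformly on a maximum stable set of that boundary, and use the identical stable-set exchange argument (your $T \cup (M \setminus (C \cup X))$ is exactly the paper's comparison of a stable set in $Z \cap V_i$ against $I \setminus (V_i \cup X)$) to show the new boundary stays within the invariant, giving the root-bag bound $\frac{2d}{1-c}+d=\frac{3-c}{1-c}d$. Your two-regime split (weight on $U$ when $\alpha(B)\le \frac{d}{1-c}$) is a clean way to force strict descent of $|U|$; the paper's single-regime induction on $|V(G)|$ is terser but does not explicitly address termination when the separator leaves only one component.
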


\begin{proof}
  We will prove that for every  set $Z \subseteq V(G)$ with
  $\alpha(Z) \leq \frac{2}{1-c}d$
  there is a tree decomposition $(T,\chi)$ of  $G$ such that
  $\alpha(\chi(v)) \leq \frac{3-c}{1-c}d$ for every $v \in T$, and
  there exists $t \in T$ such that $Z \subseteq \chi(t)$.

  The proof is by induction on $|V(G)|$. Observe that every induced subgraph
  of $G$ satisfies the assumption of the theorem.
  
  Let $Z \subseteq V(G)$ with $\alpha(Z) \leq \frac{2}{1-c}d$.
  Let $I$ be a stable set of $Z$ with $|I|=\alpha(Z)$.
  Define a function $w$ where $w(v)=\frac {1}{|I|}$ if $v \in I$,
  and $w(v)=0$ if $v \not \in I$. Then $w$ is a normal weight function on
  $G$. Let $X$ be a  $(c,w)$-balanced separator with $\alpha(X) \leq d$.
  Then $V(G) \setminus X =V_1  \cup \dots \cup V_q$, where $V_1, \dots, V_q$ are the components of $G \setminus X$, and $|I \cap V_i| \leq c|I|$ for $i\in \{1,\dots, q\}$. Let $i \in \{1, \dots, q\}$. 
  Define $Z_i=(Z \cap V_i) \cup X$. Since
  $|I \cap V_i| \leq c|I|$,
  it follows that $|I \cap (G \setminus V_i)| \geq (1-c)|I|$.
Since $\alpha(X) \leq d$, we have that $|I \cap (G \setminus (V_i \cup X))| \geq (1-c)|I|-d$.
  It follows that 
  $\alpha(Z \cap V_i) \leq c|I|+d$. Consequently,
  $$\alpha(Z_i) \leq \alpha(Z \cap V_i) +\alpha (X) \leq c|I|+2d \leq \frac{2c}{1-c}d + \frac{2(1-c)}{1-c}d = \frac{2}{1-c}d.$$

  Inductively,  for $i \in \{1, \dots, q\}$, there is a tree decomposition $(T_i,\chi_i)$ of
  $V_i \cup X$ such that
  $\alpha(\chi_i(t)) \leq \frac{3-c}{1-c}d$ for every $v \in T_i$, and
  there exists $t_i \in T_i$ such that $Z_i \subseteq \chi(t)$.
  Now let $T$ be obtained from the disjoint union of $T_1, \dots, T_q$
  by adding  a new vertex $t_0$ adjacent to $t_1, \dots, t_q$ (and with no other
  neighbors). Define $\chi(t)=\chi_i(t)$ for every $t \in T_i$,
  and let $\chi(t_0)=Z \cup X$. Since
  $$\alpha(Z \cup X) \leq \alpha(Z)+\alpha(X) \leq \frac{2}{1-c}d+d=
  \frac{3-c}{1-c}d,$$
  $(T,\chi)$ satisfies the conclusion of the theorem.
  \end{proof}

  Next we restate and prove  Theorem~\ref{treealpha}:

 \begin{theorem} \label{treealpha_2}
  There exists a constant $c$ such that for every integer $n>1$  every 
  $n$-vertex graph $G \in \mathcal{C}$ has tree independence number at most
  at most $c(\log n)^2$.
 \end{theorem}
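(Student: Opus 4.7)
The plan is to combine the three main ingredients developed in the previous sections: the dominated balanced separators from Section~\ref{sec:domsep}, the ``cooperative subgraph'' separation theorem from Section~\ref{banana}, and the domination-to-stability transformation from Section~\ref{sec:smallsep}. The final step then uses the standard balanced-separator-to-tree-decomposition translation of Lemma~\ref{lemma:bs-to-treealpha}.

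Concretely, let $d$ be the constant from Theorem~\ref{balancedsep}, and let $G \in \mathcal{C}$ have $n$ vertices. I would first verify that $G$ is $(L,d,r)$-breakable for $L = 32\log n$ and $r = d(2+\log n)$. Condition~(2) in the definition of $(L,d,r)$-breakability is exactly Theorem~\ref{balancedsep}, applied to any induced subgraph $G'$ (noting that the class $\mathcal{C}$ is closed under induced subgraphs). For condition~(1), note that every clique is cooperative by the first clause of the definition of cooperativity, so Theorem~\ref{H1H2logn} applied to any two disjoint, anticomplete cliques $H_1, H_2$ of $G$ yields a separator $X$ with $\alpha(X) \le 32\log(n+1-|H_1|-|H_2|) \le 32\log n = L$; this holds uniformly over all $r$, so in particular for cliques of size at most $r$.

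Since $\mathcal{C}$ is a class of theta-free graphs, Theorem~\ref{smallsep} now applies. With the parameters above, $\lceil d(2+\log n)/r\rceil = 1$, so Theorem~\ref{smallsep} gives, for every normal weight function $w$ on $G$, a $w$-balanced separator $Y_w$ in $G$ with
\[
\alpha(Y_w) \;\le\; C(d)\cdot (2+\log n)\cdot 32\log n \;=\; O\bigl((\log n)^2\bigr),
\]
where the hidden constant depends only on $d$. Finally, Lemma~\ref{lemma:bs-to-treealpha} with $c=\tfrac{1}{2}$ converts this into the bound $\ta(G) \le 5\cdot O((\log n)^2) = c(\log n)^2$ for a suitable constant $c$, which is exactly the conclusion we want.

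There is no real obstacle at this stage: all the heavy lifting is already done in the preceding sections. The only point that requires a moment of care is the verification of $(L,d,r)$-breakability, specifically observing that cliques fall under the cooperative-subgraph hypothesis of Theorem~\ref{H1H2logn} and that the parameters $L$, $d$, $r$ can be chosen so that $r \le d(2+\log n)$ while keeping the resulting separator bound poly-logarithmic. Once the parameters are fixed this way, the theorem follows by chaining Theorems~\ref{balancedsep}, \ref{H1H2logn}, and \ref{smallsep}, and then applying Lemma~\ref{lemma:bs-to-treealpha}.
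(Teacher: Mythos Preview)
Your proposal is correct and follows essentially the same route as the paper: choose $L=32\log n$, $r=d(2+\log n)$, verify $(L,d,r)$-breakability via Theorems~\ref{balancedsep} and~\ref{H1H2logn} (using that cliques are cooperative and that $\mathcal{C}$ is hereditary), invoke Theorem~\ref{smallsep} to get a $w$-balanced separator of stability number $O((\log n)^2)$, and finish with Lemma~\ref{lemma:bs-to-treealpha}. The only cosmetic difference is that the paper tracks explicit constants ($c=165C(d)$, bound $33C(d)(\log n)^2$) rather than using $O$-notation.
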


 \begin{proof}
   Let $G \in \mathcal{C}$.
Let $d$ be as in Theorem~\ref{balancedsep}.
Let $C(d)$ be as in Theorem~\ref{smallsep}, and let
$c=165C(d)$. We may assume that $n \geq c$.
Let $r=d(2+\log n)$, and let $L=32 \log n$.
By Theorems~\ref{balancedsep} and \ref{H1H2logn}, and since every clique
is cooperative, it follows that $G$ is $(L,d,r)$-breakable.
Now  by Theorem~\ref{smallsep}, for every normal weight function $w$ on $G$,
there exists a $w$-balanced separator $Y$ in $G$ such that
$\alpha(Y) \leq C(d)  \frac{d (2+\log n)} {r}(2+\log n) L \leq 33 C(d) (\log n)^2$ (since $n>1$).
Now  Theorem~\ref{treealpha_2} follows from Lemma~\ref{lemma:bs-to-treealpha}.
\end{proof}

\section{Algorithmic consequences} \label{sec:alg}
Theorem~\ref{treealpha_2} implies quasi-polynomial time (namely $2^{(\log n)^{O(1)}}$ time) algorithms for a number of problems. 
In particular Dallard et al.~\cite{dms2} gave $n^{O(k)}$ time algorithms for \textsc{Maximum Weight Independent Set} and \textsc{Maximum Weight Induced Matching} assuming that a tree decomposition with independence number at most $k$ is given as input. 
Lima et al.~\cite{lima2024tree} gave an $n^{O(k)}$ time algorithm for \textsc{Maximum Weight Induced Forest} assuming that a tree decomposition with independence number at most $k$ is given as input. 
Dallard et al.~\cite{dfgkm} gave an algorithm that takes as input a graph $G$ and integer $k$, runs in time $2^{O(k^2)}n^{O(k)}$ and either outputs a tree decomposition of $G$ with independence number at most $8k$, or determines that the tree independence number of $G$ is larger than $k$.
Theorem~\ref{treealpha_2}, together with these results (setting $k = c \log^2 n$), immediately imply the following theorem. 

\begin{theorem}\label{thm:alg_iset}
\textsc{Maximum Weight Independent Set}, \textsc{Maximum Weight Induced Matching}, and  \textsc{Maximum Weight Induced Forest} admit algorithms with running time $n^{O((\log n)^3)}$ on graphs in $\mathcal{C}$.
\end{theorem}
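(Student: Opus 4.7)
The proof will be a direct combination of Theorem~\ref{treealpha_2} with the two results of Dallard et al.\ cited immediately before the theorem statement. The plan is: given an $n$-vertex graph $G \in \mathcal{C}$, set $k = \lceil c (\log n)^2 \rceil$, where $c$ is the constant from Theorem~\ref{treealpha_2}. That theorem guarantees $\ta(G) \leq k$.

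First, I would invoke the algorithm of \cite{dfgkm} on input $(G,k)$. Since $\ta(G) \leq k$, this call cannot terminate with the ``$\ta(G)>k$'' verdict, and so it must return a tree decomposition $(T,\chi)$ of $G$ whose independence number is at most $8k = O((\log n)^2)$. The running time of this step is $2^{O(k^2)} n^{O(k)}$, which with $k = O((\log n)^2)$ becomes $2^{O((\log n)^4)} \cdot n^{O((\log n)^2)}$. Since $2^{(\log n)^4} = n^{(\log n)^3}$, this simplifies to $n^{O((\log n)^3)}$. Second, I would feed $(T,\chi)$ into the algorithm of \cite{dms2}, which solves \textsc{MWIS} (respectively \textsc{MWIM}) in time $n^{O(k)} = n^{O((\log n)^2)}$ given a tree decomposition with independence number $O(k)$. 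Adding the two running times yields a total bound of $n^{O((\log n)^3)}$, as required.

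There is essentially no obstacle in this argument: the combinatorial content lies entirely in Theorem~\ref{treealpha_2}, while the algorithmic machinery is off-the-shelf. The only point that deserves a brief verification is the arithmetic showing that, for poly-logarithmic $k$, the $2^{O(k^2)}$ overhead of \cite{dfgkm} dominates the $n^{O(k)}$ cost of \cite{dms2} and produces the exponent $(\log n)^3$ in the final bound.
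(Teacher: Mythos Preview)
Your proposal is correct and follows exactly the approach the paper takes: the authors state that Theorem~\ref{treealpha_2} together with the cited results of Dallard et al.\ (setting $k = c\log^2 n$) ``immediately imply'' the theorem, and you have simply spelled out the arithmetic verifying that the $2^{O(k^2)}n^{O(k)}$ and $n^{O(k)}$ bounds combine to $n^{O((\log n)^3)}$.
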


It is worth mentioning that the $n^{O(k)}$ time algorithm of Dallard et al.~\cite{dms2} works for a slightly more general packing problem (see their Theorem 7.2 for a precise statement) that simultaneously generalizes \textsc{Maximum Weight Independent Set} and \textsc{Maximum Weight Induced Matching}. Thus we could have stated Theorem~\ref{thm:alg_iset} for this even more general problem. 

Lima et al.~\cite{lima2024tree} observed that the algorithm of Dallard et al.~\cite{dms2} can be generalized to a much more general class of problems. In particular they show that for every integer $\ell$ and CMSO$_2$ formula $\phi$, there exists an algorithm that takes as input a graph $G$ of tree independence at most $k$, and a weight function $w : V(G) \rightarrow \mathbb{N}$, runs in time $f(k,\phi,\ell) n^{O(\ell k)}$ and outputs a maximum weight vertex subset $S$ such that $G[S]$ has treewidth at most $\ell$ and $G[S] \models \phi$. This formalism captures all problems mentioned in Theorem~\ref{thm:alg_iset}, \textsc{Maximum Weight Induced Path}, recognition of many well-studied graph classes (including $\mathcal{C}$) and a host of other problems. We remark that their result (Theorem 6.2 of~\cite{lima2024tree}) is stated in terms of clique number rather than treewidth, however at the very beginning in the proof they show that in this context bounded clique number implies treewidth at most $\ell$ and then proceed to prove the theorem as stated here.
Unfortunately the algorithm of \cite{lima2024tree} does not give any meaningful results when combined with Theorem~\ref{treealpha_2}. The reason is that the function $f(k,\phi,\ell)$ bounding the running time of the algorithm is a tower of exponentials, which leads to super-exponential running time bounds even when $k = c \log^2 n$. 
We conjecture that the algorithm of \cite{lima2024tree} can be modified to run in time $(f(\ell, \phi)n)^{O(\ell k)}$, which is quasi-polynomial for every fixed $\ell, \phi$ when $k = O(\log^{O(1)} n)$. Such an improvement would immediately yield quasi-polynomial time algorithms for all of the problems mentioned above (and others, see~\cite{lima2024tree}) on graphs in $\mathcal{C}$.





\section{Acknowledgments}
Peter Gartland proved Theorem~\ref{balancedsep} independently  using a
different method. We thank him for sharing his proof with us, and for
many useful discussions. We also thank the anonymous referee for careful reading of our paper and for many helpful suggestions.

\bibliographystyle{abbrv}
\bibliography{ref}

\end{document}